%\NeedsTeXFormat{LaTeX2e}

\documentclass[english]{amsart}

\usepackage{amssymb}
\usepackage{amsmath}
\usepackage{amsfonts}
\usepackage{paralist}
\usepackage{pst-all}
\usepackage{pst-poly}
\usepackage{graphics}
\usepackage{graphicx}
\usepackage{epsfig}
\usepackage{color}
\usepackage[all]{xy}
\usepackage{subfigure}
\xyoption{arc}

\usepackage[colorlinks=true]{hyperref}
\hypersetup{urlcolor=blue, citecolor=red}

%MARGENES

 \textheight=8.2 true in
   \textwidth=5.0 true in
    \topmargin 30pt
     \setcounter{page}{1}

%   LETRAS

\newcommand{\rr}{\mathbb{R}}
\newcommand{\zz}{\mathbb{Z}}

\newcommand{\nn}{\mathbb{N}}

\newcommand{\dd}{\mathbb{D}}
\newcommand{\sss}{\mathbb{S}}

%Fernando: newcommands

\newcommand{\groupoid}[2]{\xymatrix@C=16pt{ #1 \ar@<1.5pt>[r]^{\scriptscriptstyle  \ \ \alpha}
\ar@<-1.5pt>[r]_{\scriptscriptstyle \ \, \beta} & #2}}
\newcommand{\F}{\mathcal{F}} 
\newcommand{\R}{\mathcal{R}}
\newcommand{\scirc}{{\scriptstyle \circ}}

%MATRICES CHICAS
\newcommand{\matrice}[4]{ ( \, 
\raisebox{-0.8ex}{$\stackrel{^{\scriptstyle #1}}{\scriptstyle #2}$
$\stackrel{^{\scriptstyle #3}}{\scriptstyle #4}$} \, ) }

% TEOREMAS
\newtheorem{theorem}{Theorem}[section]

\newtheorem{lemma}[theorem]{Lemma}
\newtheorem{proposition}[theorem]{Proposition}

\theoremstyle{definition}
\newtheorem{definition}[theorem]{Definition}
\newtheorem{remark}[theorem]{Remark}
\newtheorem{remarks}[theorem]{Remarks}

\begin{document}

\title[Minimal F{\o}lner foliations are amenable]
      {Minimal F{\o}lner foliations are amenable}

%1st author
\author[Fernando Alcalde Cuesta]{Fernando Alcalde Cuesta} 
\address{Departamento de Xeometr\'{\i}a e Topolox\'{\i}a \\
Universidade de Santiago de Compostela \\  E-15782 Santiago de Compostela (Spain)}
\email{fernando.alcalde@usc.es}

%2nd author
\author[Ana Rechtman]{Ana Rechtman}
\address{Department of Mathematics \\ University of Illinois at Chicago, 322 SEO \\ 851 S. Morgan Street, Chicago IL 60607-7045, USA}
\email{rechtman@math.uic.edu}
\date{}

% \centerline{(Communicated by the associate editor name)}

\subjclass{37A20, 43A07, 57R30}

 \keywords{Foliations, measurable equivalence relations, local and global means.}

\begin{abstract}
For finitely generated groups, amenability and F\o lner properties are equivalent.  However, contrary to a widespread idea, Kaimanovich showed that F\o lner condition does not imply amenability for discrete measured equiva\-lence relations. In this paper, we exhibit two examples of $C^\infty$ foliations of closed manifolds that are F\o lner and non amenable  with respect to a finite transverse invariant measure and a transverse invariant volume, respectively.
We also prove the equivalence between the two notions when the foliation is minimal, that is all the leaves are dense, giving a positive answer to a question of Kaimanovich. The equivalence is stated with respect to transverse \mbox{invariant} measures or some tangentially smooth measures. 
The latter include harmonic measures, and in this case the F{\o}lner condition has to be replaced by $\eta$-F{\o}lner (where the usual volume is modified by the modular form $\eta$ of the measure).

\end{abstract}

\maketitle

\section{Introduction}
\label{introduccion}

Discrete equivalence relations provide a natural approach to the study of foliations on compact manifolds:  the leaves induce a discrete equivalence relation on any total transversal. We can also see this equivalence relation as the orbit equivalence relation of the holonomy pseudogroup, which is of finite type in the compact case. Some properties of the foliation can be appreciated in this discrete setting, and do not depend upon the choices that we made. This is the case for the {\it amenability} and {\it F{\o}lner} properties, the two main notions that we will study in this paper.
\medskip 

This two notions are motivated by the corresponding ones for finitely generated groups. 
A finitely generated group is {\it amenable} if and only if it is {\it F{\o}lner}. Let us start by recalling the definitions for groups. A finitely generated group $G$ is said to be {\it amenable} if there is an {\it invariant mean}, that is a positive linear functional on the Banach space $l^\infty(G)$ which maps the constant function $1$ to $1$, and is translation invariant. E. F{\o}lner showed that $G$ is amenable if and only if
$$
\inf_{E}\frac{|\partial E|}{|E|}=0,
$$
where $|\cdot|$ denotes the cardinality of a set, $E\subset G$ are finite subsets, and $\partial E$ is the boundary of $E$ with respect to a given set of generators of $G$. A group satisfying the latter condition is  said to be {\it F{\o}lner}. 
\medskip 

Both definitions can be easily stated for a compact foliated manifold
equipped with a transverse invariant measure, using the induced
equivalence relation. {\it Amena\-bility} is the property of having (in
a measurable way) a mean on almost all the equivalence classes, with
respect to the given measure (see \S~\ref{promediable}). On the other hand, each equivalence class may be realized as the set of vertices of a graph, which is naturally quasi-isometric to the corresponding leaf. An equivalence class is {\it F{\o}lner} if there are finite subgraphs $A$ with arbitrary small isoperimetric ratio $\frac{|\partial A|}{|A|}$, where $\partial A$ is the boundary of $A$. We say that a foliation is {\it F{\o}lner} if almost all the equivalence classes on a total transversal are F{\o}lner. 
\medskip 

In this paper we are concerned with the relation between this two concepts for compact foliated manifolds. In 1983, R. Brooks stated without proving (see example-theorem 4.3 of \cite{broo}): {\it Let $\mathcal{F}$ be a foliation with invariant measure $\mu$. If $\mu$-almost all leaves are F{\o}lner, $\mathcal{F}$ is amenable with respect to $\mu$.} One of the aims of the present paper is to show that amenability cannot be deduced from the condition of having F{\o}lner leaves, thus disproving Brooks' statement.  This will be done with two examples of $C^\infty$ foliations of closed manifolds that are F{\o}lner and non-amenable. Both examples can be made real analytic. It is important to say that in 2001, V. A. Kaimanovich constructed several examples of discrete equivalence relations that are F{\o}lner and non-amenable \cite{kaim}. In the same paper, he gave an example of a $C^\infty$ foliation satisfying the same properties with respect to a transverse invariant measure, that is not finite. The relevance in the two examples presented in this paper are the measures: in the first one, presented in \S~\ref{seccionejemploreeb}, the measure in consideration is a finite transverse invariant measure; and in the second one, \S~\ref{seccionejemplowilson}, it is a transverse invariant volume. On the other hand, in 1985 it was proved by Y. Carri\`ere and \'E. Ghys that an amenable foliation, with respect to a transverse invariant measure, is F{\o}lner \cite{cagh}.
\medskip 

The role of the measure considered is crucial. In this paper we will
study the amenability and F{\o}lner properties with respect to either
transverse invariant measures, or {\it tangentially smooth
  measures}. The latter are measures on the ambient manifold that are
smooth along the leaves, and will be introduced in \S~\ref{medidas}. To give an example, we can say that harmonic measures and smooth measures on the manifold are tangentially smooth. As we will explain in \S~\ref{seccionfolner}, we will have to adapt the definition of F{\o}lner leaf for a tangentially smooth measure $\mu$: we will define the concept of $\eta$-F{\o}lner leaf, in the same spirit that F{\o}lner, but using a modified tangent metric. The modification is done with the modular form $\eta$  of the measure $\mu$, as explained in \S~\ref{seccionfolner}.
\medskip 

In \cite{kaim}, Kaimanovich asked if the {\it minimality} of the foliation guarantees the equivalence between the two notions. Here minimal means that all the leaves are dense. 
The main objective of this paper is to present a  proof of the following theorems:

\begin{theorem}
\label{teoremamedidainvariante}
Let $\mathcal{F}$ be a minimal foliation of a compact manifold $M$, and $\nu$ a transverse invariant measure. Assume that $\mathcal{F}$ has no essential holonomy, then $\mathcal{F}$ is amenable with respect to $\nu$ if and only if $\nu$-almost all leaves are F{\o}lner.
\end{theorem}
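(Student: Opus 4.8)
The plan is to prove the two implications separately. The implication ``amenable $\Rightarrow$ F\o lner'' is already available: it is the theorem of Carri\`ere and Ghys \cite{cagh}, which holds for any foliation carrying a transverse invariant measure and needs neither minimality nor a hypothesis on holonomy. The real content is therefore the converse, ``F\o lner $\Rightarrow$ amenable'', and this is exactly where minimality and the absence of essential holonomy must enter, since by Kaimanovich's examples \cite{kaim} the implication fails in general.

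First I would pass to the discrete measured equivalence relation $\mathcal R$ induced on a complete transversal $T$, equipped with the measure $\nu$. The absence of essential holonomy guarantees that for $\nu$-almost every point the holonomy cover of the leaf $L_x$ coincides with $L_x$, so that each $\mathcal R$-class, endowed with the graph structure coming from a finite symmetric generating set of the holonomy pseudogroup, is quasi-isometric to the corresponding leaf and faithfully records its coarse geometry. Amenability of $\mathcal F$ with respect to $\nu$ then amounts to the existence of a $\nu$-measurable, $\mathcal R$-invariant field $x \mapsto m_x$ of means on the classes; equivalently, by the Reiter reformulation, to the existence of nonnegative measurable functions $\phi_n$ on $\mathcal R$ with $\sum_{y} \phi_n(x,y)=1$ for almost every $x$ and $\sum_{y} |\phi_n(x,y)-\phi_n(x',y)| \to 0$ for $\nu$-almost every pair $(x,x')\in\mathcal R$.

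The core of the argument is to manufacture such a family $\phi_n$ from the leafwise F\o lner sets. For each leaf the F\o lner condition yields finite sets $A\subset L_x$ with arbitrarily small isoperimetric ratio $|\partial A|/|A|$, whose normalized indicators are approximately invariant \emph{on that single class}; the difficulty is that a priori these sets cannot be chosen measurably in $x$, nor with any uniform control, and this is precisely the phenomenon behind Kaimanovich's counterexamples. Here I would exploit minimality: since every leaf is dense and $M$ is compact, the foliation enjoys a uniform recurrence property, namely any finite plaque-pattern occurring in one leaf reappears, up to small distortion and within a uniformly bounded transverse distance, in every leaf. This homogeneity lets a single good F\o lner pattern be propagated to $\nu$-almost every class simultaneously and measurably, producing a genuine measurable field of approximate F\o lner functions. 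Averaging their normalized indicators against the invariant transverse measure $\nu$ and passing to a weak-$*$ limit then yields the invariant mean field and establishes amenability.

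The main obstacle is exactly this globalization step: upgrading the pointwise (in $x$) existence of F\o lner sets to a single measurable, asymptotically $\mathcal R$-invariant family. Minimality is what supplies the uniform recurrence needed to synchronize the choices across all leaves, while the absence of essential holonomy is what guarantees that the transplanted combinatorial patterns correspond to honest F\o lner sets in the leaves---rather than being collapsed or multiplied by nontrivial holonomy---and that $\nu$ may be used to integrate the construction coherently. I expect the technical heart to lie in quantifying the uniform recurrence and in checking that the distortion introduced when carrying a F\o lner pattern from one leaf to another does not spoil the smallness of the isoperimetric ratio.
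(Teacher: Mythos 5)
Your proposal takes essentially the same route as the paper: the forward direction is Carri\`ere--Ghys (reproved in \S~\ref{demostracioncarriereghys}), and for the converse the paper likewise verifies Reiter's criterion by transplanting a single F\o lner sequence across the foliation with controlled distortion. The ``uniform recurrence'' you invoke is precisely the quasi-homogeneity of minimal leaves given by Cass' theorem (theorem \ref{teoremahojasminimales}), which is exactly the key lemma the paper uses to make the propagated sets measurable while keeping their isoperimetric ratios small.
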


\begin{theorem}
\label{teoremamedidalisa} 
Let $\mathcal{F}$ be a minimal foliation of a compact manifold $M$,
and $\mu$ a tangentially smooth measure. Assume that $\mathcal{F}$ has
no essential holonomy and that the modified volume of the plaques is bounded.
%normalized density function of $\mu$ is bounded. 
Then $\mathcal{F}$ is amenable with respect to $\mu$ if and only if $\mu$-almost all leaves are $\eta$-F{\o}lner.\end{theorem}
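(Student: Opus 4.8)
The plan is to follow the strategy used for Theorem~\ref{teoremamedidainvariante}, carrying the modular form $\eta$ through every step. First I would pass from the foliation to the discrete measured equivalence relation $\R$ induced on a complete transversal $T$. Because $\mu$ is only tangentially smooth, its disintegration along the leaves is not the leafwise Riemannian volume but the $\eta$-weighted volume; consequently the natural Banach spaces on the classes are the $L^1$-spaces of this weighted leafwise measure, and the ``size'' and ``boundary'' of a subset of a class must be computed with the $\eta$-weights. This is exactly what the definition of an $\eta$-F{\o}lner leaf records, and it is the reason the ordinary F{\o}lner condition has to be replaced by its $\eta$-version here.

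For the implication amenability $\Rightarrow$ $\eta$-F{\o}lner I would reproduce the Carri\`ere--Ghys argument in the weighted setting. An invariant mean produces, by the standard Day--Namioka equivalence, a sequence of unit vectors $\xi_n$ in the leafwise $L^1$ that are almost invariant under the holonomy generators; applying the co-area (layer-cake) decomposition to the $\xi_n$ yields, at $\mu$-almost every point, finite subsets of the class whose $\eta$-isoperimetric ratio tends to $0$. The hypothesis that the modified volume of the plaques is bounded is used precisely to keep these estimates uniform, replacing the automatic boundedness that compactness of $M$ would give for the Riemannian volume.

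The substantial direction is $\eta$-F{\o}lner $\Rightarrow$ amenability, which is where Brooks' claim fails in general and where minimality enters. Starting from $\eta$-F{\o}lner sets $A_n^x$ in almost every class, one normalizes them to probability vectors $\psi_n^x = \mathbf 1_{A_n^x}/|A_n^x|_\eta$; the smallness of the $\eta$-boundary says exactly that $\psi_n^x$ becomes invariant under the generators in $L^1$, so any weak-$*$ limit of the associated means is invariant on the single class $[x]$. The difficulty---and the content of Kaimanovich's counterexamples---is that this construction is purely pointwise: the scales $n$ and the sets $A_n^x$ depend on $x$ with no control, the limits depend on a choice of ultrafilter, and the resulting family of means need not be measurable nor globally almost invariant, which is what amenability of $\R$ actually requires.

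The hard part is therefore to \emph{uniformize} the $\eta$-F{\o}lner sets, and this is what minimality together with the absence of essential holonomy provides. Since every leaf is dense, the holonomy pseudogroup acts minimally on $T$, so an $\eta$-F{\o}lner set realized at one point can be transported---without essential holonomy, and with its $\eta$-boundary controlled by the bounded modified plaque volume---to an open set of nearby transverse points; compactness of $M$ then lets finitely many such open sets cover $T$, producing a single scale that works off a set of arbitrarily small measure. This upgrades the pointwise almost-invariance to a global almost-invariance in $L^1(\R,\mu)$, equivalent to the existence of a measurable invariant family of means. I expect this uniformization step to be the main obstacle: one must choose the transported sets measurably and keep the $\eta$-isoperimetric constants from degrading under transport, and it is exactly here that both minimality (to prevent the F{\o}lner sets from concentrating on an invariant subset, as in the non-amenable examples) and the boundedness of the modified volume of the plaques (to bound the cost of transport) are indispensable.
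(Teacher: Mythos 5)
Your overall architecture matches the paper's: the easy direction is Carri\`ere--Ghys adapted to the modular form (the paper runs it through hyperfiniteness and Fatou's lemma rather than Day--Namioka plus a co-area decomposition, but both are standard), and the substance lies in showing that minimality lets one pass from leafwise $\eta$-F\o lner sets to Reiter's criterion. However, your treatment of that substantial direction has a genuine gap at its center. You assert that ``the smallness of the $\eta$-boundary says exactly that $\psi_n^x$ becomes invariant under the generators in $L^1$,'' but the F\o lner condition only controls $\|\mathbf 1_{A}-\mathbf 1_{\gamma(A)}\|$, whereas Reiter's criterion for an equivalence relation requires $\|\pi_n^x-\pi_n^{\gamma(x)}\|\to 0$: it compares the set assigned to $x$ with the set assigned to $\gamma(x)$, not with the translate $\gamma(A_n^x)$. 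These coincide only for an assignment depending on the class alone, and no such assignment exists measurably for a non-smooth relation; conflating the two is precisely the error behind Brooks' claim and the content of the non-amenable F\o lner examples. Your proposed repair --- transporting an $\eta$-F\o lner set to an open set of nearby \emph{transverse} points and covering $T$ by finitely many such sets --- addresses transverse measurability and uniformity of scales, but not this leafwise coherence: two points of the same leaf landing in different members of your cover would still receive unrelated sets, and nothing in your argument makes $\|\pi_n^x-\pi_n^{\gamma(x)}\|$ small.

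What the paper actually does, and what is missing from your sketch, is the use of Cass' theorem: a minimal leaf is quasi-homogeneous, so one fixed $\eta$-F\o lner domain $V_n$ (taken inside a plaque of a distinguished open set $U_n\cong P_n\times T_n$ with $T_n$ a total transversal) admits, within bounded leafwise distance of \emph{every} point $q$ of the leaf, a holonomy lift $f_n(V_n)$ lying in another plaque of the same $U_n$, with dilatation close to $1$. The measure attached to $q$ is normalized integration over $W_n^q=f_n(V_n)\cup P_q$ weighted by $h/h(f_n(p))$; since all these lifts sit in the shrinking neighborhood $U_n$ of the single set $V_n$ and have comparable modified volume, the normalized currents attached to any two equivalent points converge to one another, which is what verifies Reiter's criterion. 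In the tangentially smooth case one must additionally approximate $h$ in $L^1$ by continuous functions $h_{n,m}$ to control the weighted currents, and it is there --- in bounding $\mathrm{vol}_h(P_q)/\mathrm{vol}_h(W_n^q)$ and the normalized $L^1$-errors --- that the hypothesis on the bounded modified volume of the plaques is used, not in the Carri\`ere--Ghys direction where you place it. Without the quasi-homogeneity step (or the equivalent holonomy-lifting argument available in the absence of essential holonomy), your proof does not close.
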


We say that a foliation has no essential holonomy if almost all the
leaves have trivial holonomy. Under this hypothesis the modular form $\eta$ 
admits a primitive $\log h$. The function $h$ is defined almost everywhere and
is known as the density function. We will use a normalization of this
function (relative to the choice of a total transversal) to modify the
volume on the leaves (details of this definitions are in \S~\ref{medidas}).
Without going into more detail, we will like to point out that the normalized density
function of a harmonic measure is always bounded.
\medskip

Theorem \ref{teoremamedidainvariante} appeared in the
 Ph.D. thesis of the second author \cite{these}. This theorem is a particular case of theorem
\ref{teoremamedidalisa}: harmonic measures are tangentially smooth and
a transverse invariant measure combined with the Riemannian
volume on the leaves forms a harmonic measure (such a harmonic measure
is called  {\it completely invariant}), 
and has normalized density function equal to one.
The implication in the second theorem stating that an amenable foliation is F{\o}lner is a generalization of Carri\`ere and Ghys' result. They proved this implication for transverse invariant measures. We will rewrite their proof in this new setting in \S~\ref{demostracioncarriereghys}. Even if the proof of the other implication in the two theorems is very similar, we present them separately since we believe that in the one of theorem \ref{teoremamedidalisa} the technicalities hide the main ideas. For this implication, we will use a theorem by D. Cass \cite{cass} that describes minimal leaves.
\medskip 

The paper is divided in three sections: the first one contains the
definitions and concepts we will use; in the second one we describe
the two examples of non-amena\-ble F{\o}lner foliations; and the third
one contains the proofs of the main theorems.
\medskip 

The second author will like to thank her Ph.D. thesis advisor \'Etienne Ghys for all his patience and comments during the preparation of this work; and Mexico, that has sponsored her Ph.D. studies through the scholarship program of the Consejo Nacional de Ciencia y Tecnolog\'ia (CONACyT). This work was partially supported by ANR-06-BLAN-0030 in France and Xunta de Galicia INCITE08E1R207051ES in Spain.

\section{Preliminaries and general definitions}
\label{notaciondef}

For the purposes of this paper we will always consider a foliation with a measure,  which may be a transverse quasi-invariant measure or a tangentially smooth measure on the ambient space. We will always consider a compact foliated manifold $(M,\mathcal{F})$ of class $C^r$, for $2\leq r\leq \infty$ or $r=\omega$, endowed with a Riemannian metric $g$ that induces a quasi-isometric class of Riemannian metrics on the leaves. Let us consider a foliated atlas 
$\mathcal{A}=\{(U_i,\phi_i)\}_{i\in I}$ where $\phi_i:U_i\to P_i\times T_i$ is a map from an open subset of $M$ to the product of open discs in $\mathbb{R}^d$ and $\mathbb{R}^{m-d}$. Let $\pi_i:U_i\to T_i$ be the natural projection onto the {\it local transversal}  $T_i$ whose fibres are the {\it plaques} of $U_i$. If $U_i \cap U_j \neq \emptyset$, the change of charts 
$\Phi_{ij} = \phi_j \scirc \phi_i^{-1}  : \phi_i(U_i\cap U_j) \to \phi_j(U_i\cap U_j)$ 
is given by
$$
 \Phi_{ij}(x,y) = (\psi_{ij}^y(x), \gamma_{ij}(y))
$$
where $\gamma_{ij}$ is an $C^r$-diffeomorphism between open subsets of $T_i$ and  $T_j$, and 
$\psi_{ij}^y$ is a $C^r$-diffeomorphism depending continuously on $y$ in the $C^r$-topology. We will need the following additional conditions:
\smallskip 

 \noindent
 --  the cover $\mathcal{U} = \{U_i\}_{i\in I}$ is locally finite,
\smallskip 

 \noindent 
 -- for each $i \in I$,  $\overline{U}_i$ is a compact subset of a  foliated chart (not necessarily belonging to $\mathcal{U}$), so the plaques satisfy that their $d$-volume and the $(d-1)$-volume of their boundaries is uniformly bounded (by above and below);
\smallskip 

 \noindent 
 -- if $U_i \cap U_j\neq \emptyset$, there exists a foliated chart containing $\overline{U_i \cap U_j}$ and then each plaque of $U_i$ intersects at most one plaque of $U_j$.
\smallskip 

It is clear that $T=\amalg_{i\in I}T_i$ is a total transversal which intersects every leaf. An equivalence relation $\R$ is naturally defined on $T$:  two points $x,y\in T$ are equivalent if and only if they belong to the same leaf of $\mathcal{F}$.  

\begin{definition}
Let $\R$ be an equivalence relation on a standard Borel space $(T, \mathcal{B})$. We have the following definitions:
\smallskip 

\noindent
--  $\R$ is {\em discrete} if every equivalence class $\R[x]$ is at most countable. 
\smallskip 

\noindent
-- $\R$ is {\em measurable} if its graph is a Borel subset of  $T\times T$.
\smallskip 

\noindent
-- A measure $\nu$ on $(T,\mathcal{B})$ is said to be {\em quasi-invariant} for $\R$ 
if for every Borel set $B\in \mathcal{B}$ with $\nu(B)=0$, the saturation of $B$ is also of measure zero. 
\smallskip 

\noindent 
-- If $\R$ and $\nu$ are as above, 
we say that $\R$ is a {\em discrete measured equivalence relation} on $(T, \mathcal{B}, \nu)$.
\end{definition}

The equivalence relation $\R$ induced by $\mathcal{F}$ on $T$ is
discrete and measured. From another point of view, we can see $\R$ as
the orbit equivalence relation defined by the natural action of the
pseudogroup $\Gamma$, called the {\em holonomy pseudogroup of $\F$}, generated by the local diffeomorphisms $\gamma_{ij}$.
It is important to emphasize that $\{\gamma_{ij}\}$ forms a finite generating set,
that we will call $\Gamma_1$. Notice that we
can visualize each equivalence class $\R[y] = \Gamma(y)$ as a graph:
the vertices are the points in $\R[y]$ and 
the edges correspond to the generators.
We can define a graph metric
$$
d_\Gamma(z,z^\prime)=\min_n\{\exists g \in \Gamma_n |g(z)=z^\prime\},
$$ 
where $\Gamma_n$
are the elements that can be expressed
as words of length at most $n$ in terms of $\Gamma_1$.  
A transverse invariant measure of $\mathcal{F}$, that is a measure on $T$
invariant under the action of $\Gamma$,  gives us an example of a quasi-invariant measure for $\R$.

%%%%%%%%%%%%%%%%%%%%%%%%%%%%%%%%%%%%%%%%%%%%%%%%%
\subsection{Measures}
\label{medidas}

In the latter paragraph we discussed invariant and quasi-invariant measures. Here we are going to discuss a little bit further about quasi-invariant measures and to introduce {\it tangentially smooth measures}. Contrary to the first two cases, these measures are globally defined. Harmonic measures (introduced by L. Garnett in \cite{garn}) are an example of tangentially smooth measures. An interesting fact about harmonic measures is that they always exist, which is not the case for transversal invariant measures.
\medskip 

Consider the discrete equivalence relation $\R$, induced by a
foliation, on a total transversal $T$ endowed with a quasi-invariant
measure $\nu$. Integrating the counting measures on the fibers of the
left projection $(y,z)\mapsto y$ from $\R$ to $T$ with respect to
$\nu$, gives the {\it left counting measure}
$d\widetilde{\nu}(y,z)=d\nu(y)$.  For the right projection,
we get  the {\it right counting measure} $d\widetilde{\nu}^{-1}(y,z)=d\widetilde{\nu}(z,y)=d\nu(z)$.
Then, $\nu$ is quasi-invariant if and only if $\widetilde{\nu}$ and $\widetilde{\nu}^{-1}$ are equivalent, in which case the Radon-Nikodym derivative
$$
\delta(y,z)=\frac{d\widetilde{\nu}}{d\widetilde{\nu}^{-1}}(y,z)
$$
is called the {\it Radon-Nikodym cocycle} of $(T, \nu,\R)$. 
Finally, let $|\cdot|_y$ be the measure on the equivalence class of $y$ defined as $|z|_y=\delta(z,y)$.
\medskip 

Let us now study global measures. Consider a regular Borel measure $\mu$ on the mani\-fold $M$. Using the foliated atlas, we can give a local decomposition $\mu=\int \lambda_i^yd\nu_i(y)$ on each $U_i$, where $\lambda_i^y$ is a measure on the plaques and $\nu_i$ a measure on $T_i$.
  
\begin{definition}[Tangentially smooth measure]
\label{definiciontangencialmentelisa}
A measure $\mu$ on $(M,\mathcal{F})$ is {\em tangen\-tially smooth} if for every $i\in I$ and $\nu_i$-almost every $y\in T_i$, the measures $\lambda_i^y$ are absolutely continuous with respect to the Riemannian volume $d\mbox{vol}^y$, and the density functions
$$
h_i(x,y)=\frac{d\lambda_i^y}{d\mbox{vol}^y}(x,y),
$$
are smooth functions of class $C^{r-1}$ on the plaques.
\end{definition}

\noindent Observe that the functions $h_i$ are measurable in the transverse direction. We could change $C^\infty$ by $C^r$, for $r\geq 2$, and the results in this paper will still be valid. 
In the intersection of two  foliated charts $U_i$ and $U_j$, we have two local decompositions of the measure $\mu$. Indeed, if $U_i \cap U_j \neq \emptyset$, 
 we have that
 $$
 \mu|_{U_i\cap U_j}=\int \lambda_i^yd\nu_i(y)=\int \lambda_j^yd\nu_j(y).
 $$
Thus pushing the measure $\nu_j$ with the holonomy diffeomorphism $\gamma_{ji}$ we deduce that
$$
\delta_{ij}(y)  =  d((\gamma_{ji})_*\nu_j) / d\nu_i(y) = 
 \frac{h_i(x,y)}{h_j(\psi^y_{ij}(x), \gamma_{ij}(y))}.
 $$
Then the functions $h_i$ verify that $\log h_i-\log h_j=\log \delta_{ij}$ on $U_i\cap U_j$. Since $\delta_{ij}$ is a function on $T_i$, we have that $d_\mathcal{F}\log h_i=d_\mathcal{F}\log h_j$,
where $d_\mathcal{F}$ is the derivative along the leaves of $\mathcal{F}$. Then $\eta= d_\mathcal{F}\log h_i$ is a well defined foliated 1-form of class $C^{r-2}$ along the leaves and measurable in the transverse direction. 

\begin{definition}[Modular form]
 \label{definicionformamodular}
 The 1-form $\eta$ is the {\em modular form} of $ \mu$.
 \end{definition}
  
The modular form measures the transverse measure distortion under the holonomy. If $\sigma$ is a path in a leaf going from a point $p$ to a point $q$, then $\exp(\int_\sigma \eta)$ is the distortion of the transverse measure by the holonomy transformation along $\sigma$. Since the $\delta_{ij}$ are functions defined on $T$, the functions $h_i$ defined on the plaques match in the intersections modulo multiplication by a constant. Thus they define primitives $\log \tilde{h}$ of the induced $1$-form
$\tilde{\eta}$ on the holonomy covering $\tilde{L}$ of each leaf $L$.
If $\mathcal{F}$ has no essential holonomy, the functions $h_i$ can be glued together to get a measurable function $h$ on $M$, such that $\log h$ is a primitive of $\eta$ almost everywhere. 

\begin{remarks} 

\noindent
-- Definition \ref{definiciontangencialmentelisa} is a little more restrictive than 
A. Candel's definition (corollary 5.3 of \cite{cand}). However, harmonic measures are tangentially smooth in our sense. A harmonic measure $\mu$ is completely invariant if and only if $\eta=0$ (we refer to corollary 5.5 of \cite{cand}).
\medskip

\noindent
-- The homotopy groupoid $\Pi_1(\F)$ and the holonomy groupoid $Hol(\F)$
can be interpreted as the larger and smaller unwrapping of $\F$ so that the range projections  $\beta$ are developing maps of the foliated structure (see \cite{phil}). If we denote by $\alpha$ the source projections, this means that  the lifted foliations $\widehat{\F} = \alpha^\ast \F$ on $\Pi_1(\F)$ and $\widetilde{\F} = \alpha^\ast \F$ on $Hol(\F)$ are defined by the $C^r$ submersions $\beta :  \Pi_1(\F) \to M$ and $\beta : Hol(\F) \to M$, respectively. Then $\eta$ lifts  to a pair of foliated $1$-forms 
$\hat{\eta} \in \Omega^1(\widehat{\F})$ and $\tilde{\eta} \in \Omega^1(\widetilde{\F})$.  The first one $\hat{\eta}$ admits a global primitive $\log \hat{h}$ because the first group of foliated cohomology is trivial (we refer to corollary 6.14 of \cite{alcaldehector}). 
Then it is easy to see that this primitive induces on $Hol(\F)$ a global primitive $\log \tilde{h}$ of $\tilde{\eta}$, which puts together all the local primitives described above.
\end{remarks}

%%%%%%%%%%%%%%%%%%%%%%%%%%%%%%%%%%%%%
\subsection{F{\o}lner foliations} \label{seccionfolner}

Traditionally a graph (with bounded geometry), as for example the equivalence class $\R[x]$ of an equivalence relation induced by a foliation of a compact manifold, is {\it F{\o}lner} if there exist finite subsets $A$ of vertices with arbitrarily small isoperimetric ratio $\frac{|\partial A|}{|A|}$, where $\partial A$ is the set of edges having exactly one end point in $A$ (or the set of  these end points  in $A$), see \cite{goodmanplante} and \cite{kaimanovichpotential}  for the original definitions. The notion of a {\it $\delta$-F{\o}lner equivalence class}, given below, was introduced by Kaimanovich in \cite{kaimcomptes}. In this section we are going to introduce a continuous analogue to his definition: {\it $\eta$-F{\o}lner leaf}. Since the modular form $\eta$ has a primitive on each leaf without holonomy, we have to restrain ourselves to such leaves, that form a residual set. We will say that a foliation is {\it F{\o
 }lner}  (respectively, {\it $\eta$-F{\o}lner}) if almost every leaf is F{\o}lner (respectively, $\eta$-F{\o}lner).
Let us start by stating Kaimanovich's definition in terms of
foliations, observe that the definition does not depends upon the point
$y$.

\begin{definition}
\label{definciondetalfolner}
Let $\mathcal{F}$ be a foliation of a compact manifold $M$.  Let $\R$ be the induced equivalence relation on a total transversal $T$, and assume that $\nu$ is a quasi-invariant measure whose Radon-Nikodym derivative is $\delta(y,z)$. We say that the equivalence class $\R[y]$ is {\em $\delta$-F{\o}lner} if there exists a sequence of finite sets $\{A_n\}_n\subset \R[y]$ such that
$|\partial A_n|_y / |A_n|_y \to 0$,
as $n\to \infty$. 
\end{definition}

Let us pass to the continuous analogue of this notion. Consider the
foliated space $(M,\mathcal{F})$ with a tangentially smooth measure
$\mu$. On a leaf without holonomy  $L_y$ passing through $y \in T$ we can define a leafwise
volume form $d\mbox{vol}_h$ as the volume of the leafwise Riemannian
metric $g$ multiplied by the normalized density function $h / h(y)$. 
We will call this volume the {\it modified volume} and $(h / h(y)) g$ the {\it modified metric} on the leaf $L_y$. 
\
\begin{definition}
\label{definiconfolner}
Let $(M, \mathcal{F})$ be a compact foliated manifold, endowed with a tangentially smooth measure $\mu$. Let $L_y$ be a leaf without holonomy.  Then $L_y$ is {\em $\eta$-F{\o}lner}
if there exists a sequence of compact domains with boundary  $V_n \subset L_y$ such that
$$
\frac{\mbox{area}_h(\partial V_n)}{\mbox{vol}_h(V_n)}\to 0,
$$
as $n\to \infty$. Here $\mbox{area}_h$ denotes the $(d-1)$-volume and $\mbox{vol}_h$ the $d$-volume with respect to the modified metric, and $d$ is the dimension of $\mathcal{F}$.
\end{definition}

\begin{remarks}   \label{hacotada}

\noindent
-- In general, the notion of $\eta$-F\o lner can be applied to the leaves of $\widetilde{\mathcal F}$, that is the holonomy covers of the leaves of $\mathcal F$.
\medskip

\noindent
--  In the case where $\mu$ is a completely invariant harmonic measure, the function $h$ is constant and thus the modified volume and the Riemannian volume coincide. Hence, we recover  the common definition of F{\o}lner leaf. In this situation, the assum\-ption that $M$ is compact together with
the condition that the volume of the plaques and the area of their boundaries are uniformly bounded implies that {\em for every $y\in T$, the leaf $L_y$ passing through $y$ is F{\o}lner if and only if the corresponding graph $\R[y]$ is F{\o}lner.}
\medskip

\noindent
--  When the measure $\mu$ is a harmonic measure (and assuming that
the ambient manifold is compact) we have that the modular form $\eta$
is bounded, we refer to lemma 4.19 on page 116 of B. Deroin
Ph.D. thesis \cite{derointesis}. The result is based on the
Harnack inequality. In fact, the density
function $h$ is harmonic and thus inside a distinguished open set we
have that there exists a constant $C>1$ such that
$$
\frac{1}{C}\leq \frac{h(q)}{h(p)}\leq C,
$$
for all $p \in M $ and for all $q \in L_p$. Hence, the function $\log h$ is uniformly tangentially Lipschtiz, {\it i.e.} $| \log h(p) - log h(q) | \leq C d(p,q),$
where $d(p,q)$ the distance between $p$ and $q$ in $L_p$. This implies that
the modified volume of the plaques and the modified area of their
boundaries remain uniformly bounded.
Therefore, {\em the leaf $L_y$ is $\eta$-F{\o}lner if and only if the graph $\R[y]$ is $\delta$-F{\o}lner}.
\end{remarks}

%%%%%%%%%%%%%%%%%%%%%%%%%%%%%%%%%%%%%%%%%%%%%%%%
\subsection{Amenable foliations}
\label{promediable} 

In this section we will recall what amenability is in the context of foliations and study some equivalent notions. A foliation $\mathcal{F}$ of a compact manifold with a transverse quasi-invariant measure $\nu$ is usually said to be {\it amenable}
if the equivalence relation $\R$, on $(T,\nu)$, is $\nu$-amenable. This definition is independent of the choices we made to define the equivalence relation, and was introduced by R. J. Zimmer in \cite{zimm}. 

\begin{definition}
\label{definicionpromediable}
Consider a standard Borel measure space $(T, \mathcal{B},\nu)$. A discrete measured equivalence relation $\R$ on $T$ is {\em amenable} (or {\em $\nu$-amenable}) if 
for $\nu$-almost every $y\in T$ there is a {\em mean} on $\R[y]$ ({\it  i.e.} 
a  linear map
$m_y:L^\infty(R[y])\to \rr$
such that $m_y(f)\geq 0$ for $f\geq 0$ and $m_y(1)=1$) so that the function $y\mapsto m_y$ on $\R$ is
\smallskip 

\noindent
-- {\em measurable}, in the sense that for a measurable function $\tilde{f}$ defined on the 
graph of $\R$, the function defined by $f(y)=m_y(\tilde{f}(y,\cdot))$ is measurable;
\smallskip 

\noindent
-- {\em invariant}, in the sense that $m_y=m_z$ for all $z\in \R[y]$.
\end{definition}

In the following theorem we summarize some equivalent notions of amenability of discrete measured equivalence relations, and thus of a foliation.

\begin{theorem}\label{teoremapromediable}
Let $\R$ be a discrete measured equivalence relation on $(T,\mathcal{B},\nu)$. The following conditions are equivalent:
\smallskip 

\noindent
(i) $\R$ is amenable.
\smallskip 

\noindent
(ii) There exist sequences of probability measures 
$\{ \pi_n^y \}_{y\in T, n\in \nn}$ on $\R[y]$, such that 
$$
\|\pi_n^y-\pi_n^z\|\to 0
$$ 
for  $\widetilde{\nu}$-almost all $(y,z)\in \R$. Here $\|\cdot\|$ is the norm in the space of probability measures on an equivalence class. The map $y \mapsto  \pi_n^y$ is measurable for all $n$, in the same sense as in definition \ref{definicionpromediable}. We will call this Reiter's criterion.
\smallskip 

\noindent
(iii) There exist sequences of probability measures $\{ \pi_n^y \}_{y\in T, n\in \nn}$ on $\R[y]$ such that 
$$
 \int_{\mbox{Dom($\gamma$)}} \|\pi_n^y-\pi_n^{\gamma(y)}\|d\nu(y)\to 0
$$ 
for any partial transformation $\gamma$ of $T$ ({\it  i.e.}  a Borel isomorphism between Borel subsets of $T$ whose graph is contained in $\R$). The map $y \mapsto \pi_n^y$ is measurable for all $n$.
\smallskip 

\noindent
(iv) $\R$ is {\it hyperfinite}, that is there exists an increasing  sequence of finite measured equivalence relations $\R_n$ on $(T, \mathcal{B},\nu)$ such that $\R[y]=\bigcup \R_n[y]$.
\end{theorem}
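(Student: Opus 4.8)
The plan is to establish the ``soft'' Reiter-type equivalences \mbox{(i)--(iii)} by direct functional-analytic arguments, keeping careful track of measurability in $y$, and to invoke the deep Connes--Feldman--Weiss theorem for the equivalence with hyperfiniteness~(iv). The logical skeleton will be (i) $\Leftrightarrow$ (ii) $\Leftrightarrow$ (iii), together with (iv) $\Rightarrow$ (ii) proved by hand and (i) $\Rightarrow$ (iv) quoted from the literature; this closes the chain of equivalences.

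First I would treat (ii) $\Leftrightarrow$ (iii). The implication (ii) $\Rightarrow$ (iii) is immediate: since each $\pi_n^y$ is a probability measure one has $\|\pi_n^y-\pi_n^z\|\leq 2$, so for any partial transformation $\gamma$ the $\widetilde{\nu}$-a.e.\ convergence $\|\pi_n^y-\pi_n^{\gamma(y)}\|\to 0$ combined with dominated convergence over $\mathrm{Dom}(\gamma)$ yields the integrated statement. For (iii) $\Rightarrow$ (ii) I would use that $\R[y]=\Gamma(y)$ with $\Gamma$ \emph{countable}, so that the graph of $\R$ is a countable union of graphs of partial transformations $\gamma\in\Gamma$. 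Applying (iii) to each such $\gamma$ and diagonalizing, I would extract a single subsequence $(\pi_{n_j}^y)_j$ for which $\|\pi_{n_j}^y-\pi_{n_j}^{\gamma(y)}\|\to 0$ for $\nu$-almost every $y\in\mathrm{Dom}(\gamma)$ and every $\gamma\in\Gamma$. A routine null-set bookkeeping (each bad pair $(y,z)\in\R$ is $(y,\gamma(y))$ for some $\gamma$, and the corresponding exceptional sets are null) then gives $\|\pi_{n_j}^y-\pi_{n_j}^z\|\to 0$ for $\widetilde{\nu}$-almost every $(y,z)\in\R$, which is (ii).

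Next comes (i) $\Leftrightarrow$ (ii), the measured analogue of the classical passage between invariant means and Reiter sequences. For (ii) $\Rightarrow$ (i), I would regard each $\pi_n^y$ as an element of the unit ball of $L^\infty(\R[y])^\ast$ and set $m_y(f)=\mathrm{LIM}_n\int f\,d\pi_n^y$ for a fixed Banach limit $\mathrm{LIM}$; positivity and normalization pass to the limit, measurability in $y$ is preserved because each $y\mapsto\int f\,d\pi_n^y$ is measurable and $\mathrm{LIM}$ is a fixed functional, and the asymptotic invariance $\|\pi_n^y-\pi_n^z\|\to 0$ forces $m_y=m_z$ on each class. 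For (i) $\Rightarrow$ (ii), the point is to convert weak-$\ast$ approximate invariance of the means into norm approximate invariance, which I would do by a measurable version of Day's convexity argument: a mean is a weak-$\ast$ limit of probability measures, and since the relevant set of measures is convex, Mazur's theorem replaces weak-$\ast$ closure by norm closure, producing the required Reiter sequences. The main technical obstacle in this step is to run Day's argument \emph{uniformly and measurably in $y$}, i.e.\ to produce one sequence $n\mapsto\pi_n^y$, measurable in $y$, that works simultaneously for almost every class; this requires a measurable selection over the field of Banach spaces $\{\,\ell^1(\R[y])\,\}_{y\in T}$ rather than a class-by-class construction.

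Finally, for (iv) I would prove (iv) $\Rightarrow$ (ii) directly. If $\R=\bigcup_n\R_n$ with the $\R_n$ finite and increasing, the natural candidates are the normalized cocycle-weighted counting measures $\pi_n^y(z)=\delta(z,y)\big/\sum_{w\in\R_n[y]}\delta(w,y)$ for $z\in\R_n[y]$. The cocycle identity $\delta(z,y)=\delta(z,w)\,\delta(w,y)$ makes the normalization independent of the base point, so $\pi_n^y=\pi_n^z$ whenever $y$ and $z$ lie in the same $\R_n$-class; since any $(y,z)\in\R$ eventually does, we even get $\|\pi_n^y-\pi_n^z\|=0$ for large $n$, which is (ii). The reverse implication (i) $\Rightarrow$ (iv) is the Connes--Feldman--Weiss theorem, that every amenable discrete measured equivalence relation is hyperfinite; this is the genuinely deep ingredient, and I would cite it rather than reprove it. I expect this last equivalence to be the real obstacle of the statement, the Reiter-type equivalences being comparatively routine once the measurability bookkeeping is handled.
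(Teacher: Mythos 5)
The paper does not actually prove this theorem: it states it as a summary of known results and refers to Connes--Feldman--Weiss \cite{cofw} for (i)$\Leftrightarrow$(iv) and to Kaimanovich \cite{kaimcomptes,kaim} for the Reiter criteria (ii) and (iii). Your proposal therefore cannot ``match'' the paper's proof; what it does is reconstruct the standard arguments from that literature, and the skeleton you chose --- (ii)$\Leftrightarrow$(iii) by dominated convergence and a diagonal extraction over a countable generating family of partial transformations (Feldman--Moore), (iv)$\Rightarrow$(ii) via the cocycle-weighted normalized counting measures $\pi_n^y(z)=\delta(z,y)/\sum_{w\in\R_n[y]}\delta(w,y)$, and (i)$\Rightarrow$(iv) quoted from \cite{cofw} --- is the right one. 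It is worth noting that your (iv)$\Rightarrow$(ii) measures are exactly the averaging kernels the paper itself uses later in the proof of Proposition \ref{carriereghysquasi-invariante}, so this part is fully consistent with how the authors exploit the theorem.

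There is, however, one step that fails as written: in (ii)$\Rightarrow$(i) you claim that $m_y(f)=\mathrm{LIM}_n\int f\,d\pi_n^y$ is measurable in $y$ ``because each $y\mapsto\int f\,d\pi_n^y$ is measurable and $\mathrm{LIM}$ is a fixed functional.'' A Banach limit obtained from Hahn--Banach or an ultrafilter does \emph{not} in general send a bounded sequence of measurable functions $\bigl(a_n(y)\bigr)_n$ to a measurable function $y\mapsto\mathrm{LIM}_n a_n(y)$; this is precisely the point where the literature either invokes a Mokobodzki medial limit (which requires extra set-theoretic hypotheses) or, as in \cite{cofw} and \cite{renault}, reformulates the mean as a positive unital $L^\infty(T,\nu)$-linear map $L^\infty(\R,\widetilde{\nu})\to L^\infty(T,\nu)$ and takes a weak-$\ast$ cluster point there, which builds the measurability in from the start. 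A similar measurable-selection issue arises in your (i)$\Rightarrow$(ii) Day--Mazur step; you flag it honestly but do not resolve it, and it is not routine in the pointwise formulation of Definition \ref{definicionpromediable}. So the proposal is a correct map of the proof but leaves the genuinely delicate measurability passages, which are the actual content of (i)$\Leftrightarrow$(ii) beyond the group case, to be supplied from the cited sources.
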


The equivalence between $(i)$ and $(iv)$ was established by A. Connes, J. Feldman and B. Weiss in \cite{cofw}. They proved that an equivalence relation is amenable if and only if it is generated by an action of $\zz$. Reiter's criterion, both $(ii)$ and $(iii)$ where introduced by Kaimanovich. We refer to \cite{kaimcomptes} and \cite{kaim}.
\medskip 

Let us place ourselves in the general situation of a compact foliated manifold $(M, \mathcal{F})$ with a tangentially smooth measure $\mu$. We have a natural equivalence relation $\R_M$ on $M$ whose equivalence classes are the leaves of $\F$. Therefore, each fibre 
$\R_M^p = \{ (p,q) \in M \times M|  q \in L_p \}$ identifies with the leaf $L_p$.
By replacing the counting measure on $\R^x \equiv \R[x]$ by the 
Riemannian volume on $\R_M^p \equiv L_p$, we can obtain a {\em Haar system} in the sense of \cite{renault} (also named  {\em invariant transverse function} in \cite{cofw}), that is a family  of measures $\lambda^p$ supported by the fibres 
$\R_M^p$ such that $\lambda^q = \lambda^p$ for all $q \in L_p$. 
According to \cite{cofw}, the amenability of $\R_M$ with respect to $(\lambda,\mu)$ means the existence for $\mu$-almost every $p \in M$ of a mean on $\R_M^p$ ({\it  i.e.}  a  linear map
$m_p : L^\infty(\R_M^p,\lambda^p)\to \rr$ such that $m_p(f)\geq 0$ for $f\geq 0$ and $m_p(1)=1$) so that the function $p \mapsto m_p$ on $\R_M$ is
measurable ({\it  i.e.}  for a measurable function $\tilde{f}$ defined on the 
graph of $\R_M$, the function defined by $f(p)=m_p(\tilde{f}(p,\cdot))$ is measurable) and invariant ({\it  i.e.}  $m_p =m_q$ for all $q \in L_p$). In this context, Reiter's criterion exhibited in the 
theorem above may be formulated as follow: \emph{there exist sequences of probability measures $\{\pi_n^p\}_{p \in M, n\in \nn}$ on $\R_M^p \equiv L_p$, which are absolutely continuous with respect to 
$\lambda^p$, such that 
$\|\pi_n^p-\pi_n^q\|\to 0$ 
for $\widetilde{\mu}$-almost all $(p,q)\in \R_M$. 
The map 
$p \mapsto \pi_n^p$ is measurable for all $n$, in the same sense as before, and $\widetilde{\mu} = \int \lambda^p d\mu(p)$.} For other equivalent conditions, we refer to proposition 3.2.14 of \cite{renault}.

\begin{definition}
We will say that the $\mathcal{F}$ is {\em amenable} if it satisfies one of the following two equivalent condition (see corollary 16 of \cite{cofw}): 
\smallskip 

\noindent
--  the equivalence relation $\R_M$ on $M$ is amenable (with respect to $(\lambda,\mu)$); 
\smallskip 

\noindent
-- the induced induced equivalence relation $\R$ on some (equivalently any) total transversal $T$ is amenable (with respect to the transverse measure $\nu$ induced by $\mu$ on $T$, 
which is well defined up to measure equivalence). 
\end{definition}

\section[Non-amenable F{\o}lner foliations]{Two examples of F{\o}lner foliations that are non-amenable} 
\label{contraejemplo}

In this section we will consider foliations $\F$ with transverse invariant measures $\nu$.
Before the constructions, we are going to explain the reason why the foliations will have dimension and codimension equal to $2$. 
\medskip 

 C. Series' theorem states that if $\F$ has polynomial 
growth, then it is hyperfinite (we refer to \cite{seri}). This result was also proved by M. Samu\'elid\`es \cite{samu} in the quasi-invariant case.
In terms of foliations this result can be stated as:

\begin{theorem}[Series, Samu\'elid\`es]\label{teoremaseries}
A foliation such that $\nu$-almost all its leaves have 
polynomial growth is amenable.
\end{theorem}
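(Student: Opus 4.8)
The plan is to verify Reiter's criterion, namely condition (iii) of Theorem \ref{teoremapromediable}, using the balls of the graph metric $d_\Gamma$ as approximate Følner sets. For $y \in T$ and $n \in \nn$ write $B_n(y) = \{ z \in \R[y] : d_\Gamma(y,z) \le n \}$ for the ball of radius $n$, and let
$$
\mu_n^y = \frac{1}{|B_n(y)|_y} \sum_{z \in B_n(y)} |z|_y \, \delta_z
$$
be the probability measure on $\R[y]$ obtained by normalizing the weighted counting measure $|\cdot|_y$ of \S\ref{medidas} restricted to $B_n(y)$; using these weights is what keeps the argument compatible with mere quasi-invariance (the Samu\'elid\`es generality). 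Polynomial growth means exactly that $|B_n(y)|_y$ grows polynomially in $n$, and the whole difficulty is to turn this into an averaging sequence valid for $\widetilde{\nu}$-almost every pair.

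\emph{Reduction and main estimate.} Since the growth degree and constant are $\R$-invariant and measurable, and since amenability may be checked on the pieces of a countable measurable partition into saturated sets, I would first restrict to a positive-measure invariant set on which the growth is uniform, $|B_n(y)|_y \le C\,n^d$ for all $y$ in the piece. Writing $g_n(y) = |B_{n+1}(y)\setminus B_n(y)|_y / |B_n(y)|_y$, the identity $|B_{n+1}(y)|_y = (1+g_n(y))\,|B_n(y)|_y$ telescopes to $\sum_{n\le N}\log(1+g_n(y)) \le \log C + d\log(N+1)$. Hence for every $\varepsilon > 0$ the number of radii $n \le N$ with $g_n(y) \ge \varepsilon$ is at most $(\log C + d\log(N+1))/\log(1+\varepsilon)$, a bound independent of $y$. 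Bounded geometry of the plaques gives a uniform bound $g_n(y) \le K$, so
$$
\frac{1}{N}\sum_{n=1}^{N} g_n(y) \;\le\; \varepsilon + K\,\frac{\log C + d\log(N+1)}{N\log(1+\varepsilon)} \xrightarrow[N\to\infty]{} \varepsilon,
$$
and, $\varepsilon$ being arbitrary, the Ces\`aro averages $\frac1N\sum_{n\le N} g_n(y)$ tend to $0$ uniformly in $y$.

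\emph{Ces\`aro-averaged measures.} The reason for passing to Ces\`aro averages is that polynomial growth only forces $\liminf_n g_n(y) = 0$, not $\lim_n g_n(y) = 0$, so the balls themselves need not be Følner along a single $y$-independent sequence; averaging over the radius repairs this. I would therefore set $\pi_N^y = \frac1N \sum_{n=1}^{N} \mu_n^y$. For a generator $\gamma \in \Gamma_1$ one has $d_\Gamma(y,\gamma y) = 1$, so $B_{n-1}(y) \subseteq B_n(\gamma y) \subseteq B_{n+1}(y)$; comparing the two normalized weighted uniform measures across this shell, and using that the Radon-Nikodym cocycle $\delta$ is bounded on the generators (again by compactness), yields $\|\mu_n^y - \mu_n^{\gamma y}\| \le 2\,(g_{n-1}(y)+g_n(y))$. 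Averaging and invoking the uniform Ces\`aro estimate gives $\|\pi_N^y - \pi_N^{\gamma y}\| \to 0$ uniformly in $y$, hence $\int_{\mathrm{Dom}(\gamma)} \|\pi_N^y - \pi_N^{\gamma y}\|\,d\nu(y) \to 0$ for every generator, and thus for every partial transformation since $\Gamma_1$ generates. Measurability of $y \mapsto \pi_N^y$ is immediate from that of the balls, so condition (iii) holds and $\R$ is amenable.

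\emph{Main obstacle.} I expect the genuine difficulty to be the gap between $\liminf_n g_n = 0$ and an honest Følner sequence: a fixed radius sequence making the balls almost invariant must be chosen, yet the radii realizing near-invariance depend on the leaf, so one cannot simply transcribe the group argument pointwise. The Ces\`aro device above sidesteps this, but it rests on the uniform logarithmic bound on the density of ``bad'' radii, which in turn requires the reduction to uniform polynomial growth and the bounded-geometry control of $\delta$ on the finite generating set $\Gamma_1$. Securing those uniformities cleanly, rather than settling for almost-everywhere statements that resist integration, is the technical heart of the matter.
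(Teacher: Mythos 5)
The paper does not actually prove this statement: it is quoted as a known result of Series and Samu\'elid\`es, with a pointer to Kaimanovich's proof for sub-exponential growth in \cite{kaim}. Your construction --- normalized measures on graph-metric balls, Ces\`aro-averaged over the radius to convert $\liminf_n g_n=0$ into an honest Reiter sequence, with the density of ``bad'' radii controlled logarithmically by telescoping $\prod(1+g_n)=|B_{N}(y)|_y/|B_0(y)|_y$ --- is essentially Kaimanovich's argument, and in the setting in which the paper invokes the theorem ($\nu$ a transverse \emph{invariant} measure, so $\delta\equiv 1$ and $|\cdot|_y$ is the counting measure) it is correct. Two small points there: the uniform bound $g_n\le K$ is genuinely indispensable (without it a single shell of mass $\sim N^{d}$ defeats the Ces\`aro average while still respecting the logarithmic budget), and you rightly supply it from the finiteness of $\Gamma_1$; also, the reduction to uniform growth constants is unnecessary, since the pointwise bound $\|\mu_n^y-\mu_n^{\gamma y}\|\le 2$ lets dominated convergence handle the integral in condition (iii) from a merely almost-everywhere Ces\`aro limit.

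Where the proposal has a genuine gap is in the quasi-invariant (Samu\'elid\`es) generality you explicitly claim to cover. First, the assertion that the Radon--Nikodym cocycle $\delta$ is bounded on the generators ``by compactness'' is false in general: for an arbitrary quasi-invariant $\nu$ the derivative $d(\gamma_{ij})_*\nu/d\nu$ is just a positive measurable function, and compactness of $M$ gives no control on it (boundedness does hold for harmonic measures via Harnack, as in Remarks~\ref{hacotada}, but not for all quasi-invariant measures). Second, and more seriously, your sentence ``polynomial growth means exactly that $|B_n(y)|_y$ grows polynomially'' conflates two different quantities: polynomial growth of the leaf controls the \emph{counting} measure $|B_n(y)|$, whereas the weighted mass satisfies only $|B_n(y)|_y\le D^n|B_n(y)|$ when $\delta\le D$ on generators, so the weighted balls may grow exponentially even when the leaf has polynomial growth, and the telescoping estimate collapses. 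So as written the argument proves the invariant-measure case (which is all the paper needs), but the extension to quasi-invariant measures requires additional hypotheses or a different mechanism (e.g.\ the hyperfiniteness route of Connes--Feldman--Weiss used by Samu\'elid\`es), not merely the insertion of the weights $|\cdot|_y$.
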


\noindent
Kaimanovich gave a proof, valid for foliations with sub-exponential growth \cite{kaim}. 
Combining the latter result with the fact that the leaves in the support of an invariant measure of a codimension one foliation have polynomial growth, we refer to J. F. Plante's theorem (theorem 6.3 of \cite{plan}), we deduce that such a foliation is always amenable. 
\medskip 

As we said, we will give examples of non-amenable foliations of a compact manifold that are F{\o}lner. 
In 2001, Kaimanovich had already constructed some examples of non-amenable discrete graphed equivalence relations with F{\o}lner classes, and an example of a non-amenable F{\o}lner foliation. This foliated example has a Reeb component, and thus the invariant measure is not locally finite. We refer the reader to theorem 3 of \cite{kaim}. The examples we are going to construct have finite transverse invariant measures, and they may be interpreted as foliated versions of examples 1 and 3 described in \cite{kaim}.
\medskip 

The main idea in the following two examples is to construct a closed $4$-manifold $M$ with a non-amenable ergodic foliation $\mathcal{F}_1$ by suspension of the action of a non-amenable group, and then make a surgery so that the leaves become F{\o}lner.

\subsection{First example: inserting a Reeb component}
\label{seccionejemploreeb}

We will begin by constructing an ergodic non-amenable foliation $\mathcal{F}_1$ of a closed $4$-manifold, admitting a total transversal diffeomorphic to a torus. We will then modify the foliation in a neighborhood of the transversal, inserting arbitrarily long 
{\it bumps}, in order to prove the following  proposition:

\begin{proposition} \label{proposicionejemplofernando}
There exists an ergodic measured foliation $\mathcal{F}$ of a closed $4$-manifold possessing a finite transverse invariant measure $\nu$, that is non-amenable and is F{\o}lner.
\end{proposition}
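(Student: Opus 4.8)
The plan is to build the manifold and foliation in two stages, exactly as the authors announce: first produce a non-amenable ergodic foliation $\mathcal{F}_1$ with a finite invariant transverse measure and a convenient transversal, then surgically modify it to make the leaves F\o lner while preserving non-amenability and the finiteness of the measure. For the first stage I would use a suspension construction. Take a non-amenable group $G$ acting on a compact manifold $N$ (the codimension piece) preserving a finite measure $\nu$, ergodically; the natural candidate is a lattice-type or free-group action on a torus, since we are told the transversal should be diffeomorphic to a torus, so I would look for a linear action of a non-amenable subgroup of $SL(2,\zz)$ (which contains free groups) on $\mathbb{T}^2$ preserving Lebesgue measure. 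Suspending this action over a surface of genus $\geq 2$ (whose fundamental group surjects onto $G$) yields a closed $4$-manifold $M$ foliated by $\mathcal{F}_1$, with a total transversal $\mathbb{T}^2$ on which the holonomy pseudogroup realizes the $G$-action. Because $G$ is non-amenable and acts ergodically preserving the finite measure $\nu$, the induced equivalence relation $\R$ is ergodic and non-amenable; this is the standard dictionary between amenability of the relation and amenability of the acting group for essentially free measure-preserving actions.

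The second stage is the surgery that turns exponential-growth (non-F\o lner) leaves into F\o lner ones without restoring amenability. Since the dimension and codimension are both $2$, and Series--Samu\'elid\`es (Theorem~\ref{teoremaseries}) forbids non-amenability for subexponential growth, the leaves of $\mathcal{F}_1$ must have exponential growth, so they are a priori not F\o lner. The idea, following example 1 of Kaimanovich and the phrase ``inserting a Reeb component / arbitrarily long bumps,'' is to cut $M$ open along the transversal $\mathbb{T}^2$ and glue in a foliated piece that inflates the leaves: inside a tubular neighborhood $\mathbb{T}^2 \times D^2$ I would insert, over each transverse point, a long tube carrying a Reeb-type foliation whose added ``bump'' contributes a large, essentially one-dimensional (hence polynomially growing, highly F\o lner) chunk of leaf. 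By making the length of the bump at the $n$-th generation grow fast enough relative to the branching of the Cayley graph of $G$, the isoperimetric ratio $|\partial A_n|/|A_n|$ along the modified leaves can be driven to zero: the boundary stays controlled by the original graph structure while the volume is dominated by the inserted tubes. The key quantitative point is to choose the bump lengths so that the ratio of boundary to volume tends to $0$; this is precisely the mechanism by which one makes a non-amenable relation F\o lner.

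The delicate requirement is that the surgery must preserve a \emph{finite} transverse invariant measure and must not create amenability or essential holonomy that destroys the structure. Preserving invariance means the inserted foliated tubes must be glued so that the holonomy pseudogroup along the new transversal still preserves $\nu$ (up to the natural identification); since a Reeb component has an invariant measure supported on the boundary torus, one arranges the insertion so that the transverse invariant measure extends across the surgery and remains finite — this is where the contrast with Kaimanovich's original non-locally-finite example lies, and it is the genuinely new contribution of this proposition. Crucially, the surgery changes the \emph{geometry} of the leaves (their growth) but not the \emph{measured-equivalence-relation structure} on the transversal: the induced relation $\R$ on the transversal is unchanged (or changed only by a measure-equivalence), so non-amenability, being an invariant of the measured relation by Theorem~\ref{teoremapromediable}, is retained while F\o lnerness, being a geometric property of the leaves, is newly gained.

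I expect the main obstacle to be reconciling these two opposing demands rigorously: the bumps must be long enough to force the isoperimetric ratio to zero (an estimate that requires controlling both the word-growth of $G$ and the volume contributed at each generation of the insertion), yet the insertion must be done compatibly with a single globally defined finite invariant transverse measure and without introducing essential holonomy or altering the measured relation. Concretely, the hard part is the explicit isoperimetric estimate showing $\mathrm{area}(\partial V_n)/\mathrm{vol}(V_n)\to 0$ for the modified leaves while simultaneously verifying, via the unchanged transverse relation, that no invariant mean can exist. I would organize the write-up by (i) fixing the suspension and checking non-amenability and ergodicity, (ii) describing the bump insertion and verifying smoothness and invariance of the extended measure, and (iii) carrying out the F\o lner estimate, leaving the non-amenability to follow automatically from step (i) since the transverse relation is preserved.
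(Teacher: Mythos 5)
Your plan follows the paper's construction almost exactly: suspend a rank-two free subgroup of $SL(2,\zz)$ (acting on $\mathbb{T}^2$ with its Lebesgue measure) over $\Sigma_2$ to get a non-amenable ergodic $\mathcal{F}_1$ on a closed $4$-manifold with torus transversal $T$, then modify the foliation inside a tubular neighborhood $\dd^2\times T$ by inserting arbitrarily long bumps, so that non-amenability is inherited from the unchanged holonomy pseudogroup on a transversal outside the plug while the leaves become F\o lner. Two of your three steps are fine as sketched; in fact the F\o lner estimate is easier than you anticipate: each non-compact leaf meets the plug in infinitely many discs approaching every end, and each removed disc is replaced either by a bump whose boundary circle is \emph{unchanged} but whose area is arbitrarily large, or by a semi-infinite cylinder accumulating on the Reeb component. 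A single such bump already has isoperimetric ratio as small as you like, so no comparison with the branching rate of the Cayley graph of $\Gamma$ is needed.

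The genuine gap is in the step you yourself flag as delicate: producing the foliated plug with a \emph{finite} invariant transverse measure. The heuristic you offer --- that a Reeb component carries an invariant measure supported on its boundary torus, so one ``arranges the insertion so that the measure extends'' --- is not the operative mechanism and, taken literally, does not work: the obstruction is not the Reeb component itself but the unbounded turbulization needed to interpolate between the product foliation near $\partial\dd^2$ and the Reeb component in the core, which stretches any transversal arc crossing the interpolation region by an a priori infinite amount. The paper resolves this (Lemma~\ref{lemaejemplo1}) by foliating $\dd^2\times\sss^1\times[0,1]$ with the one-parameter family $\mathcal{H}_t=\Phi_t(\mathcal{H}_0)$, $\Phi_t(r,\psi,\theta)=(r,\psi,\theta+f_t(r))$, and observing that the invariant form $(\Phi_t)_*\,d\theta\wedge dt$ assigns to a transversal arc $C\times[0,1]$ crossing into the Reeb region the total mass $g(b)-g(a)+\int_0^1\bigl(f_t(a)-f_t(b)\bigr)\,dt$; one then chooses the turbulization functions $f_t$ so that this integral converges even though $f_t(0)\to\infty$ as $t\to 1$. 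Without this (or an equivalent) quantitative choice your construction only yields the non-locally-finite measure of Kaimanovich's original example, which is precisely what the proposition is meant to improve on; so the proof is incomplete until you exhibit such a family $f_t$ and verify the integrability.
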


\begin{proof}
Let $\Sigma_2$ be the orientable compact surface of genus two, whose fundamental group $\pi_1(\Sigma_2)$ is generated by the loops $\alpha_1$, $\beta_1$, $\alpha_2$, $\beta_2$, satisfying the relation $[\alpha_1,\beta_1]=[\alpha_2,\beta_2]$.
The quotient of $\pi_1(\Sigma_2)$ by the normal subgroup generated by $\beta_1$ and $\beta_2$
is a free group. Thus, for any free subgroup $\Gamma$ of  $SL(2,\zz)$ with two generators, we have a surjective homomorphism $\phi : \pi_1(\Sigma_2)\to \Gamma$. For example, consider as $\Gamma$ the subgroup generated  by 
$A_1=\matrice{1}{2}{0}{1}$ and $A_2 = \matrice{1}{0}{2}{1}$. In general, we will denote $A_i=\phi(\alpha_i)$, for $i=1,2$.
\medskip 

Let us construct a foliation  $\mathcal{F}_1$ of $M=\Sigma_2\times \mathbb{T}^2$ defined by the suspension of $\phi$. First, we cut $\Sigma_2$ along the loops 
$\beta_1$ and $\beta_2$, to obtain a surface $S$ of genus zero with four boundary components: $\beta_1^\pm$ and $\beta_2^\pm$. Later we consider the manifold $S\times \mathbb{T}^2$ with the product foliation by {\it double pants} $S\times\{\ast\}$, and identify $\beta_i^-\times  \mathbb{T}^2$ with $\beta_i^+\times \mathbb{T}^2$
using $Id \times A_i$, for $i=1,2$. We get $(M,\mathcal{F}_1)$ such that:
\smallskip 

\noindent
-- it has a transverse invariant volume, given by the canonical volume form on $\mathbb{T}^2$, with respect to whom $\mathcal{F}_1$ has no essential holonomy and is ergodic;
\smallskip 

\noindent
--  it is non-amenable, with respect to this volume, since the transverse structure is given by the action of $\Gamma$;

\noindent
-- the leaves without holonomy are diffeomorphic to the {\it Cantor tree}, {\it i.e.} a genus zero surface with a Cantor set of ends.
\smallskip 

Let $T$ be a total transversal diffeomorphic to $\mathbb{T}^2$. The next step in the construction is to change the foliation in the interior of a tubular neighborhood $P=\dd^2\times T$ of the transversal, using a foliation $\mathcal{H}$ of $P$. 
Observe that the leaves of $\mathcal{F}_1$ intersect infinitely many times $P$. In order to prove the proposition we need that $\mathcal{H}$ has an invariant transverse measure and that the insertion of it inside $P$ makes the leaves F{\o}lner. As we said, the idea is to replace discs in the leaves with arbitrarily long bumps.
\medskip 

\noindent
 {\em -- Construction of the foliation $\mathcal{H}$ of $P$}

Let us begin with a classical construction of G. Reeb \cite{reeb}. Consider the solid torus $\dd^2\times \sss^1$ with coordinates $(r,\psi,\theta)$, and let $\mathcal{H}_0$ be the product foliation by discs defined by the equation $d\theta =0$. Let $f_t:[-1,1]\to [0,\infty)$ be a family of $C^\infty$
{\em turbulization functions} with $t\in[0,1]$, such that
\smallskip 

\noindent
--  $f_0$ is identically zero and $f_t$ is symmetric with respect to zero for every $t > 0$; 
\smallskip 

\noindent
--  $f_t$ is decreasing in $[0,\frac{3}{4}]$ and zero in $[\frac{3}{4},1]$ for every $t>0$;
\smallskip 

\noindent
-- $\lim_{t\to 1}f_t(0)=+ \infty$ and $\lim_{t\to 1}\frac{\partial f_t}{\partial r}(\frac{1}{2})=-\infty$.
\smallskip 

\setlength{\unitlength}{1cm}
\begin{figure}
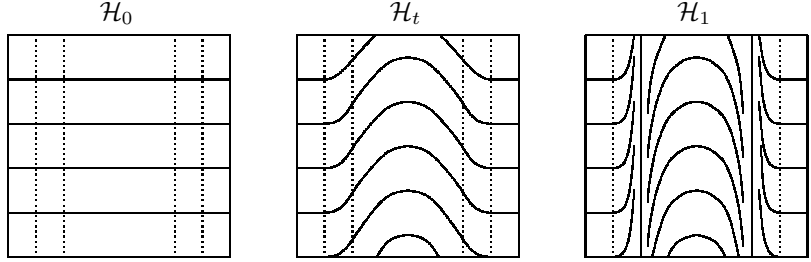

\[
\xy 0;/r.07pc/: 
%H_0
(0,0)*{}="A1"; (100,0)*{}="F1";
(0,20)*{}="A2";(100,20)*{}="F2"; 
(0,40)*{}="A3"; (100,40)*{}="F3";
(0,60)*{}="A4"; (100,60)*{}="F4";
(0,80)*{}="A5"; (100,80)*{}="F5";
(0,100)*{}="A6"; (100,100)*{}="F6";
(49,110)*{\mathcal{H}_0}="H0";
"A1"; "F1" **\dir{-}; 
"A2"; "F2" **\dir{-}; 
"A3"; "F3" **\dir{-}; 
"A4"; "F4" **\dir{-}; 
"A5"; "F5" **\dir{-}; 
"A6"; "F6" **\dir{-}; 
"A1"; "A6" **\dir{-}; 
"F1"; "F6" **\dir{-}; 
(12.5,0)*{}="B1"; (25,0)*{}="C1";(75,0)*{}="D1";(87.5,0)*{}="E1";
(12.5,100)*{}="B6"; (25,100)*{}="C6";(75,100)*{}="D6";(87.5,100)*{}="E6"; 
"B1"; "B6"**\dir{.}; 
"C1"; "C6"**\dir{.}; 
"D1"; "D6"**\dir{.}; 
"E1"; "E6"**\dir{.}; 
%H_t
(130,0)*{}="A11"; (230,0)*{}="F11";
(130,20)*{}="A21";(230,20)*{}="F21"; 
(130,40)*{}="A31"; (230,40)*{}="F31";
(130,60)*{}="A41"; (230,60)*{}="F41";
(130,80)*{}="A51"; (230,80)*{}="F51";
(130,100)*{}="A61"; (230,100)*{}="F61";
(142.5,0)*{}="B11"; (217.5,0)*{}="E11";
(142.5,20)*{}="B21"; (217.5,20)*{}="E21"; 
(142.5,40)*{}="B31"; (217.5,40)*{}="E31";
(142.5,60)*{}="B41"; (217.5,60)*{}="E41";
(142.5,80)*{}="B51"; (217.5,80)*{}="E51";
(142.5,100)*{}="B61"; (217.5,100)*{}="E61";
(155,0)*{}="C11"; (205,0)*{}="D11";
(155,100)*{}="C61"; (205,100)*{}="D61";
(179,110)*{\mathcal{H}_t}="Ht";
"A11"; "F11" **\dir{-}; 
"A61"; "F61" **\dir{-}; 
"A21"; "B21" **\dir{-}; 
"B21";(180,50) **\crv{(150,20.1) & (155,30) & (165,42) & (170,47) & (175,49.5) & (180,50)}; 
(180,50);"E21" **\crv{(180,50) & (185,49.5) & (190,47)  & (195,42) & (205,30) & (210,20.1)}; 
"A31"; "B31" **\dir{-}; 
"B31";(180,70) **\crv{(150,40.1) & (155,50) & (165,62) & (170,67) & (175,69.5) & (180,70)}; 
(180,70);"E31" **\crv{(180,70) & (185,69.5) & (190,67)  & (195,62) & (205,50) & (210,40.1)}; 
"A41"; "B41" **\dir{-}; 
"B41";(180,90) **\crv{(150,60.1) & (155,70) & (165,82) & (170,87) & (175,89.5) & (180,90)}; 
(180,90);"E41" **\crv{(180,90) & (185,89.5) & (190,87)  & (195,82) & (205,70) & (210,60.1)}; 
"A51"; "B51" **\dir{-}; 
"B51";(165.5,100) **\crv{(150,80.1) & (155,90)}; 
(194.5,100);"E51" **\crv{(205,90) & (210,80.1)}; 
(166,0);(180,10) **\crv{(170,7) & (175,9.5) & (180,10)}; 
(180,10);(194,0) **\crv{(180,10) & (185,9.5) & (190,7)}; 
"B11";(180,30) **\crv{(150,0.1) & (155,8) & (165,22) & (170,27) & (175,29.5) & (180,30)}; 
(180,30);"E11" **\crv{(180,30) & (185,29.5) & (190,27)  & (195,22) & (205,8) & (210,0.1)}; 
"E21"; "F21" **\dir{-}; 
"E31"; "F31" **\dir{-}; 
"E41"; "F41" **\dir{-}; 
"E51"; "F51" **\dir{-}; 
"A11"; "A61" **\dir{-}; 
"F11"; "F61" **\dir{-}; 
"B11"; "B61"**\dir{.}; 
"C11"; "C61"**\dir{.}; 
"D11"; "D61"**\dir{.}; 
"E11"; "E61"**\dir{.}; 
%H_1
(260,0)*{}="A12"; (360,0)*{}="F12";
(260,20)*{}="A22"; (360,20)*{}="F22"; 
(260,40)*{}="A32"; (360,40)*{}="F32";
(260,60)*{}="A42"; (360,60)*{}="F42";
(260,80)*{}="A52"; (360,80)*{}="F52";
(260,100)*{}="A62"; (360,100)*{}="F62";
(272.5,0)*{}="B12"; (347.5,0)*{}="E12";
(272.5,20)*{}="B22"; (347.5,20)*{}="E22"; 
(272.5,40)*{}="B32"; (347.5,40)*{}="E32";
(272.5,60)*{}="B42"; (347.5,60)*{}="E42";
(272.5,80)*{}="B52"; (347.5,80)*{}="E52";
(272.5,100)*{}="B62"; (347.5,100)*{}="E62";
(285,0)*{}="C12"; (335,0)*{}="D12";
(285,100)*{}="C62"; (335,100)*{}="D62";
(309,110)*{\mathcal{H}_1}="H1";
"A12"; "F12" **\dir{-}; 
"A62"; "F62" **\dir{-}; 
"A22"; "B22" **\dir{-}; 
(290,0);(310,30) **\crv{(290,0) & (295,22) & (302,27) & (305,29.5) & (310,30)}; 
(310,30);(329,0) **\crv{(310,30) & (315,29.5) & (318,27)  & (325,22) & (329,0)}; 
"B22";(282,50) **\crv{(280,20.1) & (280,30)}; 
(339,50);"E22" **\crv{(340,30) & (340,20.1)}; 
(288,5);(310,50) **\crv{(289,20) & (295,42) & (302,47) & (305,49.5) & (310,50)}; 
(310,50);(331,5) **\crv{(310,50) & (315,49.5) & (318,47)  & (325,42) & (330,20)}; 
"A32"; "B32" **\dir{-}; 
"B32";(282,70) **\crv{(280,40.1) & (280,50)}; 
(338,70);"E32" **\crv{(340,50) & (340,40.1)}; 
(288,25);(310,70) **\crv{(289,40) & (295,62) & (302,67) & (305,69.5) & (310,70)}; 
(310,70);(331,25) **\crv{(310,70) & (315,69.5) & (318,67)  & (325,62) & (330,40)}; 
"A42"; "B42" **\dir{-}; 
"B42";(282,90) **\crv{(280,60.1) & (280,70)}; 
(338,90);"E42" **\crv{(340,70) & (340,60.1)}; 
(288,45);(310,90) **\crv{(289,60) & (295,82) & (302,87) & (305,89.5) & (310,90)}; 
(310,90);(331,45) **\crv{(310,90) & (315,89.5) & (318,87)  & (325,82) & (330,60)}; 
"A52"; "B52" **\dir{-}; 
"B52";(282,100) **\crv{(280,80.1) & (280,90)}; 
(338,100);"E52" **\crv{(340,90) & (340,80.1)}; 
"B12";(282,30) **\crv{(280,0.1) & (280,10)}; 
(338,30);"E12" **\crv{(340,10) & (340,0.1)}; 
(288,65);(296,100) **\crv{(289,80) & (291.5,90) & (296,100)}; 
(324,100);(331,65) **\crv{(324,100) & (328,90) & (330,80)}; 
(297,0);(310,10) **\crv{(300,7) & (307,9.5) & (310,10)}; 
(310,10);(323,0) **\crv{(310,10) & (313,9.5) & (320,7)}; 
"E22"; "F22" **\dir{-}; 
"E32"; "F32" **\dir{-}; 
"E42"; "F42" **\dir{-}; 
"E52"; "F52" **\dir{-}; 
"A12"; "A62" **\dir{-}; 
"F12"; "F62" **\dir{-}; 
"B12"; "B62"**\dir{.}; 
"C12"; "C62"**\dir{-}; 
"D12"; "D62"**\dir{-}; 
"E12"; "E62"**\dir{.}; 
\endxy
\]
 \caption{\label{fig:foliacionH} The foliations $\mathcal{H}_0$, $\mathcal{H}_t$ for $0<t<1$ and $\mathcal{H}_1$.}
\end{figure}

Using the vector field $X_t=\frac{\partial f_t}{\partial t}\frac{\partial}{\partial\theta}$, we can push the discs of $\mathcal{H}_0$ in the $\sss^1$-direction to get a foliation $\mathcal{H}_t$ isotopic to $\mathcal{H}_0$.  In fact, the isotopy is given by $\Phi_t(r,\psi,\theta)=(r, \psi, \theta + f_t(r))$. The foliations $\mathcal{H}_t$ define a homotopy in the space of foliations from $\mathcal{H}_0$ and $\mathcal{H}_1$. We will call {\it Reeb foliation} the $C^\infty$ foliation $\mathcal{H}$ of $\dd^2\times\sss^1\times [0,1]$ defined by this
homotopy, see figure \ref{fig:foliacionH} where $\dd^2$ is horizontal and $\sss^1$ vertical. The leaves of $\mathcal{H}$ meet $\partial\dd^2\times \sss^1\times [0,1]$ along the circles $\partial\dd^2\times\{\theta\}\times\{t\}$.
Observe that $\mathcal{H}_1$ has a Reeb component inside the solid torus 
$\{ (r,\psi,\theta) \in  \dd^2\times \sss^1 | r\leq \frac{1}{2}\}$. In the complement of this component, $\mathcal{H}$ has a transverse invariant volume given by the differential form 
$\omega= (\Phi_t)_* d\theta\wedge dt$, but the existence of a Reeb component prevents the foliation to have a transverse invariant volume. To find a transverse invariant measure, let us consider the volume form $\omega_0=d\theta\wedge dt$ on the annulus $A_0=\{x_0\}\times\sss^1\times[0,1]$, for a point $x_0\in \partial \dd^2$. The next lemma guarantees the existence of a transverse invariant measure under a convenient choice of the functions $f_t$.

\begin{lemma}
\label{lemaejemplo1}
There exists a Reeb foliation $\mathcal{H}$ of $\dd^2\times \sss^1\times [0,1]$ with a finite transverse invariant measure.
\end{lemma}

\begin{proof}
We want to find the necessary conditions so that $\mathcal{H}$ possesses a transverse invariant measure. First, let us construct a total transversal to $\mathcal{H}$. For the point $x_0\in\partial \dd^2$, take the arc 
$C=\{(rx_0,\theta)\in \dd^2\times \sss^1|\,a<r<b, \,\, \theta=g(r)\}$,
where $a\in (0,\frac{1}{2})$, $b\in(\frac{1}{2},1)$ and $g:(a,b)\to \rr$ is a increasing $C^\infty$ function. Clearly, the union $A=A_0\cup (C\times[0,1])$ is a total transversal. The transverse invariant volume form $\omega$, defined on the complement of the Reeb component, gives us a transverse invariant measure on $A_0\cup(C\times [0,1))$. We want to extend this measure to a transverse invariant measure $\nu$ of $\mathcal{H}$. This is possible if the following integral is finite
\begin{eqnarray*}
\int_{C\times[0,1]}\omega \quad  =  \quad  \int_0^1 ( \int_C  ( \Phi_t)_* d\theta ) dt & = &
\int_0^1 ( \int_a^b ( \frac{\partial g}{\partial r}+\frac{\partial f_t}{\partial r} )dr )dt  \\ 
& = &
 g(b)-g(a)+\int_0^1(f_t(a)-f_t(b))dt.
\end{eqnarray*}
Observe that for a good choice of $f_t$ the function $f_t(a)-f_t(b)$ is integrable, and thus the integral is finite. We can define  the measure $\nu$ as follows: for any Borel set $B\subset C\times[0,1]$, define $\nu(B)=\int_{B\cap(C\times[0,1))}\omega$.  Hence, the transverse measure $\nu$ is defined on $A$ and it is invariant under the holonomy. Notice that the restriction of $\nu$ to $A_0$ is $\omega_0$.
\end{proof}

\setlength{\unitlength}{1cm}
\begin{figure}
\begin{picture}(5,5.5)
\put(0,-0.5){\includegraphics[width=50mm]{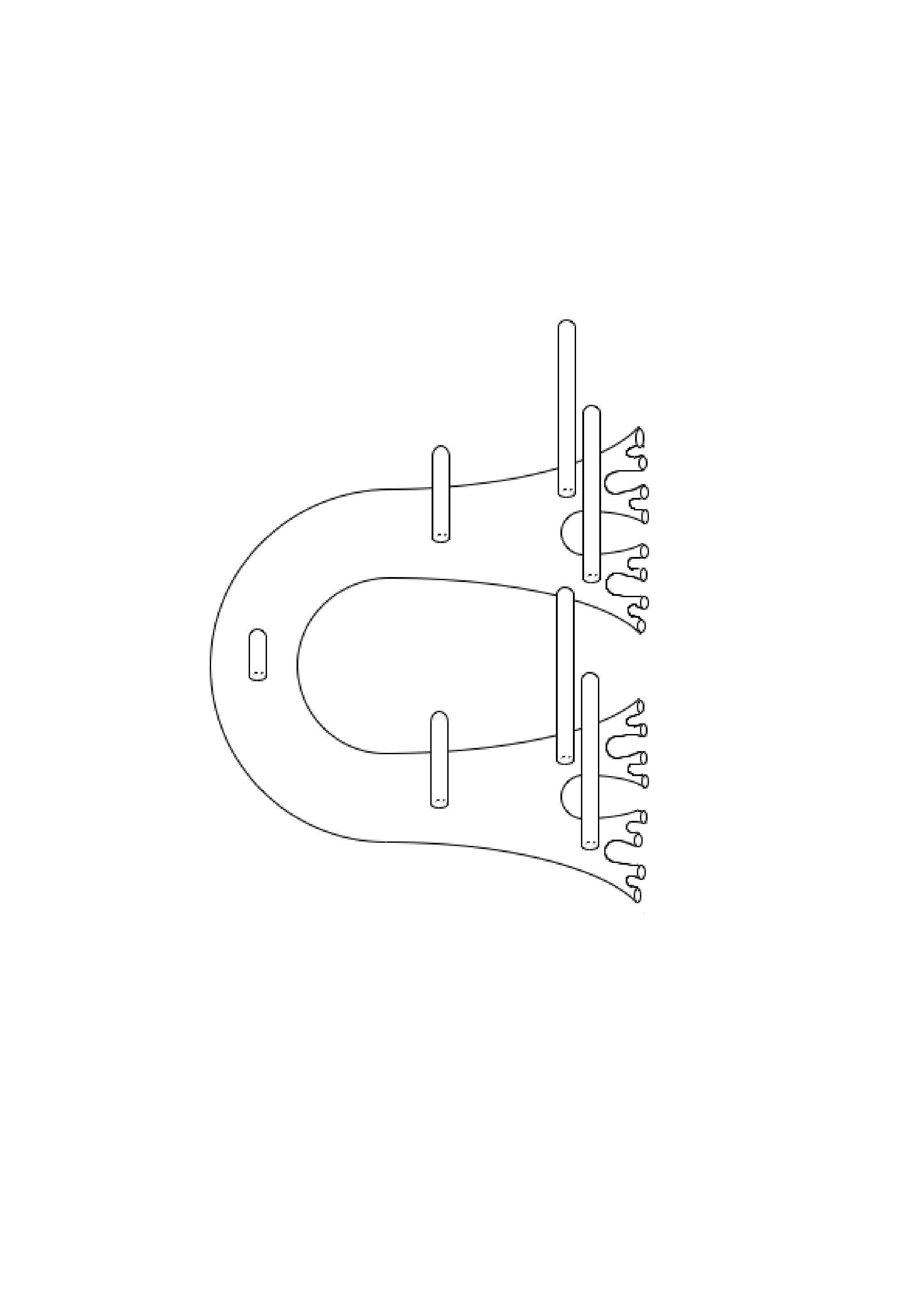}}
\end{picture}
\caption{\label{fig:arbolescantor} A Cantor tree with bumps.}
\end{figure}

We can take two copies of this foliated manifold and paste them in order to get a {\it doubled foliation} on $\dd^2\times \mathbb{T}^2$.  We will abuse of the notation by keeping calling the foliation $\mathcal{H}$. This foliation is endowed with a finite transverse invariant measure $\nu$. 
\medskip

\noindent
{\em -- Insertion of $\mathcal{H}$}

Consider the foliated manifold $(M,\mathcal{F}_1)$. Consider the transverse torus $T$ and a tubular neighborhood $P \cong \dd^2\times T$. Since $\mathcal{H}$ is transverse to the boundary of $\dd^2\times \mathbb{T}^2$, we can replace the foliation in the interior of $P$ by $\mathcal{H}$. Observe that every non compact leaf intersects the tubular neighborhood $P$ in infinitely many discs approaching any of its ends. In such a leaf, the discs are replaced either by 
\smallskip 

\noindent
--  bumps that get longer and longer as we approach the ends, and whose boundary is constant an equal to the boundary of the disc we removed;
\smallskip 

\noindent
-- by semi-infinite cylinders $\sss^1\times [0,\infty)$ that accumulate on the Reeb component.
\smallskip 

\noindent
In both cases, we get F{\o}lner sequences that approach any end. On the other hand, $\mathcal{F}$ has an ergodic transverse invariant measure (obtained from $\nu$) and is non-amenable since $\mathcal{F}$ and $\mathcal{F}_1$ have the same transverse structure in restriction to the closed set $\overline{M \setminus P}$ having $T$ as total transversal. Moreover, the set of leaves that are quasi-isometric to the Cantor tree with bumps, as in figure \ref{fig:arbolescantor}, has total mass. This implies that the foliation is F{\o}lner. 
\end{proof}

%%%%%%%%%%%%%%%%%%%%%%%%%%%%%%%%%%%%%%%%%%%%%%%%
\subsection{Second example: using Wilson's plug}
\label{seccionejemplowilson}

We will start by constructing a new non-amenable foliation $\mathcal{F}_1$ of a compact manifold by suspension and then we will perform a modification to make its leaves F{\o}lner. For the modification we will use a volume preserving version of F. W. Wilson's plug \cite{wils}. Let us define plugs for $1$-dimensional foliations, the generalization to bigger dimensions is straightforward.

\begin{definition}
\label{deftrampas} 
A {\em plug} is a manifold $P$ endowed with a $1$-dimensional foliation $\mathcal{H}$ satisfying the following characteristics: the 3-manifold $P$ is of the form $D\times [-1,1]$, where $D$ is an oriented compact 2-manifold with boundary, and 
$\mathcal{H}$ is defined by the orbits of a vector field $H$ such that:
\smallskip 

\noindent
(i)  $\mathcal{H}$ is vertical  in a neighborhood of $\partial P$, that is 
$H = \frac{\partial}{\partial z}$ where $z$ is the coordinate of the interval $[-1,1]$;
\smallskip 

\noindent
(ii)  $\mathcal{H}$ is transverse to $D \times\{\pm 1\}$, we will call $D \times\{-1\}$ the entry region and $D\times\{+1\}$ the exit region;
\smallskip 

\noindent
(iii)  the entry-exit condition is satisfied: if two points $(x,-1)$ and $(y,1)$ are in the same leaf, then $x=y$.
That is a leaf that traverses $P$, exits just in front of its entry point;
\smallskip 

\noindent
(iv)  there is at least one entry point whose  leaf has one end
 contained in $P$, we will say that the leaf is trapped by $P$.
\end{definition}

If a manifold $P$ as above endowed with a foliation satisfies all conditions except {\it (iii)}, we will call it a {\em semi-plug}. The {\it mirror-image} construction allows us to get a plug. Up to rescaling, such a plug consist of the quotient of the semi-plug $(P, \mathcal{H})$ and its mirror-image $(P,-\mathcal{H})$ on $D\times [-1,1]$ by the equivalence relation which identifies their exit and entry regions 
$D\times\{\pm 1\}$.

\setlength{\unitlength}{1cm}
\begin{figure}
\subfigure[Flow lines of $Z$]{
\includegraphics[width=55mm]{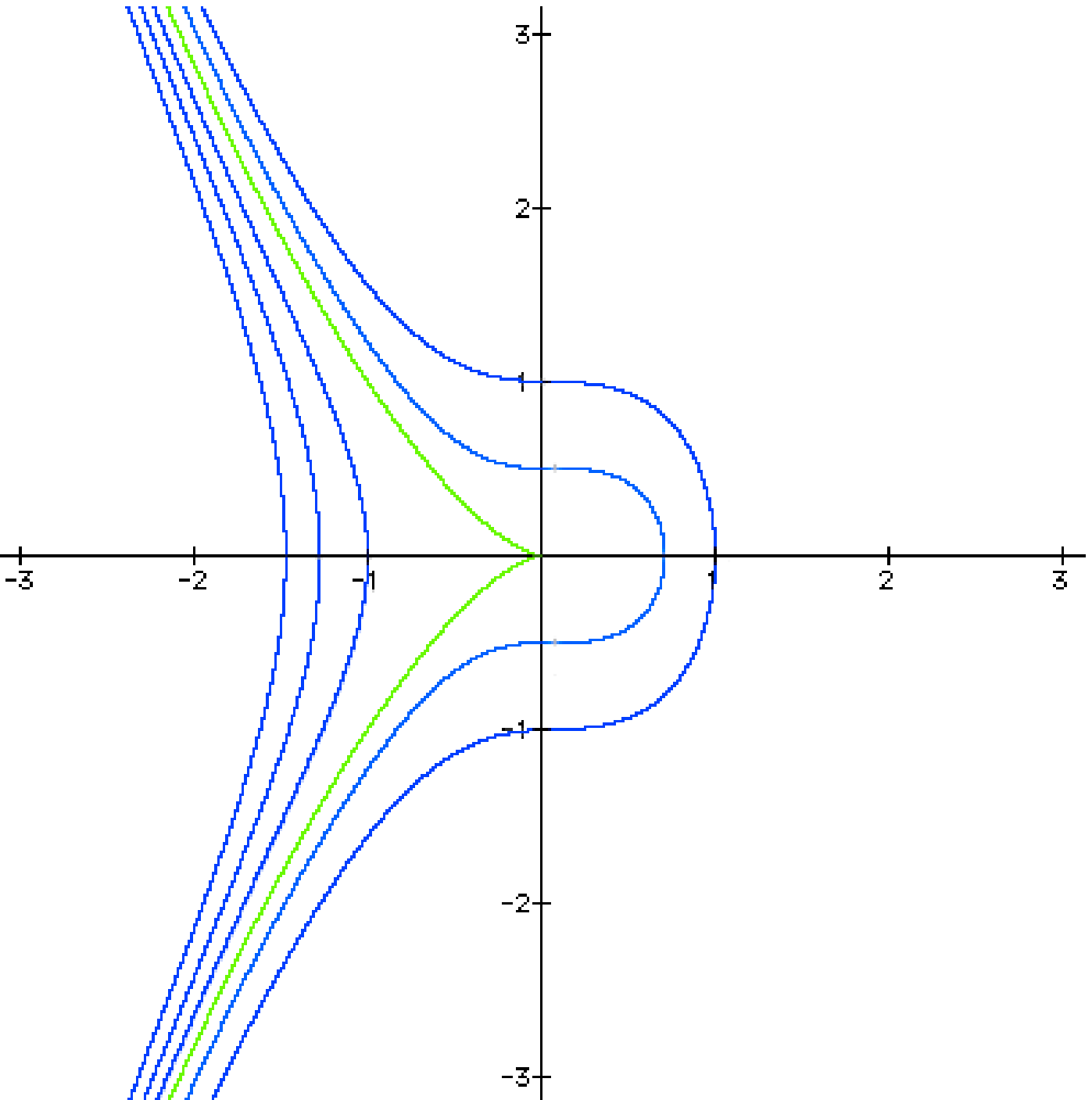} 
\label{fig:hamilton}
}
\hspace{1cm}
\subfigure[Flow lines of $H_1$]{
 \begin{picture}(5,6.3)
 \put(-0.2,0){\includegraphics[width=55mm]{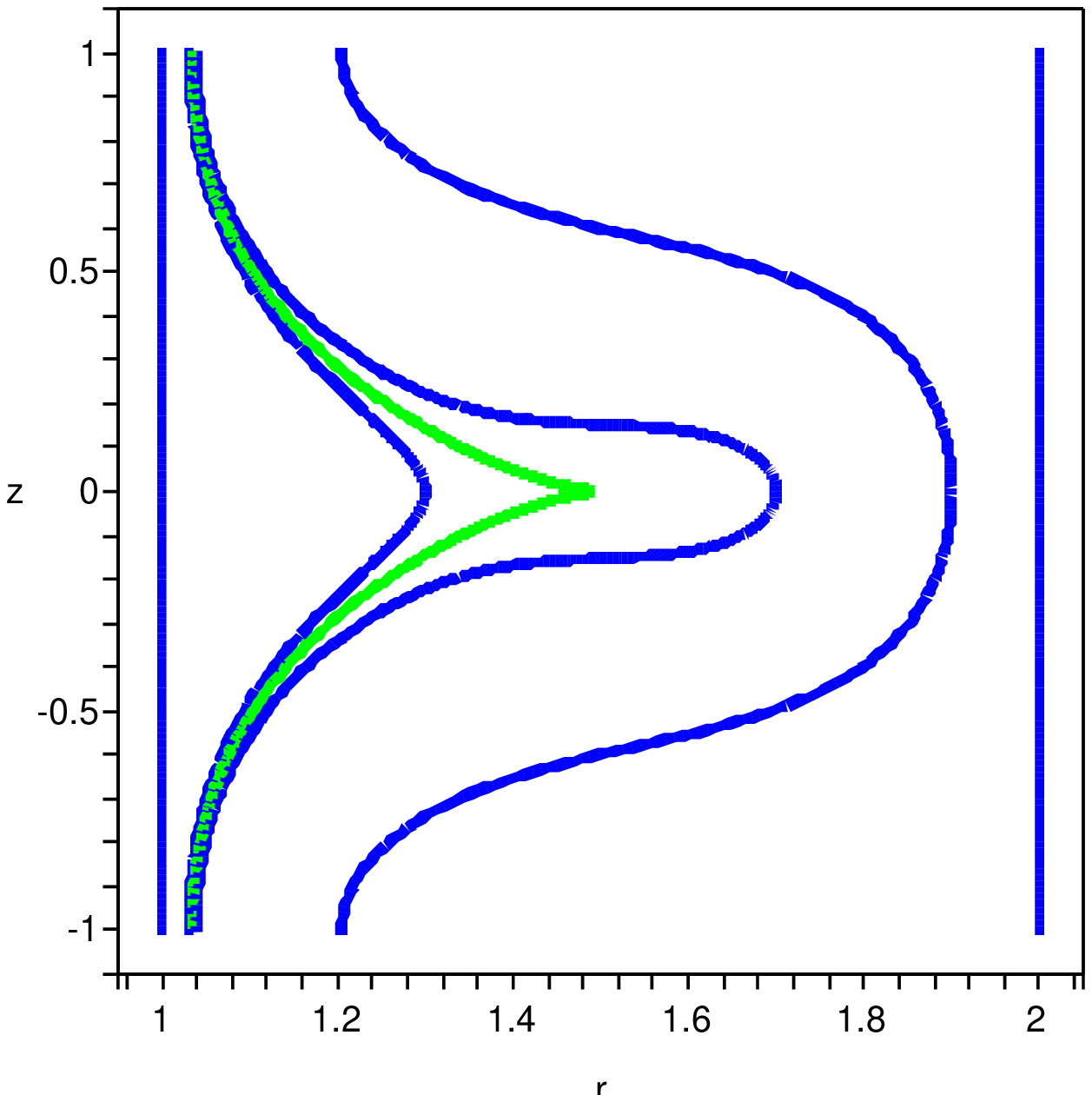}}
 \put(2.5,2.75){\makebox(0,0){$\bullet$}}
 \end{picture}
 \label{fig:curvasdeH1}
 } 
\caption{Flow lines of $Z$ and $H_1$.}
\end{figure}

\noindent
{\em --  A volume preserving version of Wilson's plug}

The original construction of Wilson \cite{wils}, gives a $3$-dimensional plug with a vector field that cannot be volume  preserving. Here we need a volume preserving analogue, 
 and we will use a method of construction based on an idea of Ghys. Consider the plane 
$\rr^2$ with coordinates $(r,z)$ and the Hamiltonian vector field 
$Z = -2z \frac{\partial}{\partial r} + 3r^2 \frac{\partial}{\partial z}$
associated to the energy function $\zeta(r,z) = r^3 +  z^2$, see figure \ref{fig:hamilton}.
By definition, the flow $\phi_t$ fixes the origin, preserves the energy levels 
$\{ \zeta=const.\}$ and the symplectic form $\omega = dr \wedge dz$. 

We are going to modify this example to make it fit  inside the rectangle $R = [1,2] \times [-1,1]$.  An area preserving vector field with the conditions we need is given by the Hamiltonian vector field $H_1$ on $R$ associated to the function
$$
h(r,z)=(r-\frac{3}{2})^3+(z^2-\frac{1}{2}z^4)g(r),
$$
where $g$ is a $C^\infty$ function equal to zero near $1$ and $2$ and such that the function
$3(r-\frac{3}{2})^2+(z^2-\frac{1}{2}z^4)g^\prime(r)$ is positive away of the singularity $(\frac{3}{2},0)$,  see figure \ref{fig:curvasdeH1}. Take on the manifold $P = \sss^1 \times R$ the vector field 
$ \widehat{H} =H_1 + f\frac{\partial}{\partial \theta}$, where 
$f:R\to \rr^+$ is a $C^\infty$ function that assumes the value zero near the boundary of the rectangle and is strictly positive on the singularity. Thus, $\widehat{H}$ is non singular and has a periodic orbit. Moreover, it preserves the volume form $d\theta\wedge \omega$. In order to satisfy the entry-exit condition we are going to use the mirror-image construction. Rescaling the vector fields to make the two copies fit inside $P$, we will call the new plug $P$ and the resulting vector field $H$. Observe that $H$ is non singular, has two periodic orbits and preserves the volume form $d\theta\wedge\omega$.
\medskip 

Hence, we have endowed the manifold $P$ with a $1$-dimensional foliation $\mathcal{H}$. Observe that there is a $2$-dimensional foliation of $P$ that is tangent to $H$: it is defined by the equation $h=const.$ and the leaves are the product of $\sss^1$ with the two copies of the flow lines of $H_1$. It is convenient to use the system of coordinates $(\theta, h, z)$.  Summarizing, we have that:
\smallskip 

\noindent
--  $\mathcal{H}$ has a transverse invariant volume, given by the differential $2$-form $\iota_H(d\theta \wedge \omega)$;
\smallskip 

\noindent
-- $\mathcal{H}$ satisfies the entry-exit condition;
\smallskip 

\noindent
-- $\mathcal{H}$ has trapped leaves: the leaves through the points in $(\theta, 0,z)$  are  semi-infinite or infinite, except for the two circle leaves corresponding to the two copies of the  singularity of $H_1$. The flow lines of $H$  in the cylinders $\{h=const.\}$  are similar to described in figure 
\ref{fig:measuredwilson} for $h = \pm\frac{1}{8}$, $h\approx 0$ and $h=0$.

\setlength{\unitlength}{1cm}
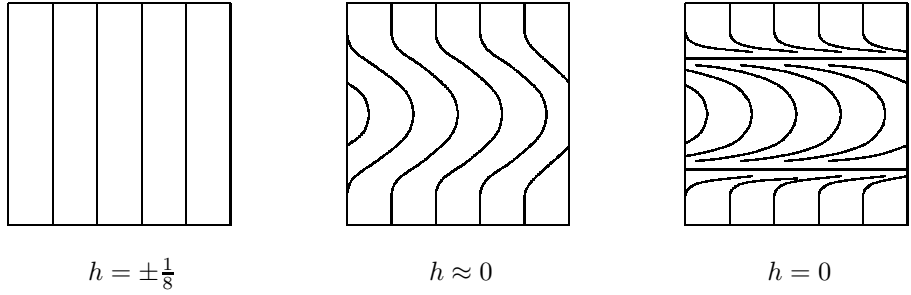
\begin{figure}
\begin{picture}(12,4)
\put(0.85,0){ $h = \pm \frac 1 8$}
\put(-0.3,3.8){
\rotatebox{-90}{
$\xy 0;/r.07pc/: 
(0,0)*{}="A1"; (100,0)*{}="F1";
(0,20)*{}="A2";(100,20)*{}="F2"; 
(0,40)*{}="A3"; (100,40)*{}="F3";
(0,60)*{}="A4"; (100,60)*{}="F4";
(0,80)*{}="A5"; (100,80)*{}="F5";
(0,100)*{}="A6"; (100,100)*{}="F6";
"A1"; "F1" **\dir{-}; 
"A2"; "F2" **\dir{-}; 
"A3"; "F3" **\dir{-}; 
"A4"; "F4" **\dir{-}; 
"A5"; "F5" **\dir{-}; 
"A6"; "F6" **\dir{-}; 
"A1"; "A6" **\dir{-}; 
"F1"; "F6" **\dir{-}; 
(12.5,0)*{}="B1"; (25,0)*{}="C1";(75,0)*{}="D1";(87.5,0)*{}="E1";
(12.5,100)*{}="B6"; (25,100)*{}="C6";(75,100)*{}="D6";(87.5,100)*{}="E6"; 
\endxy$}}
%h~ 0
\put(5.5,0){$h \approx 0$}
\put(4.2,3.8){
\rotatebox{-90}{
$\xy 0;/r.07pc/: 
(130,0)*{}="A11"; (230,0)*{}="F11";
(130,20)*{}="A21";(230,20)*{}="F21"; 
(130,40)*{}="A31"; (230,40)*{}="F31";
(130,60)*{}="A41"; (230,60)*{}="F41";
(130,80)*{}="A51"; (230,80)*{}="F51";
(130,100)*{}="A61"; (230,100)*{}="F61";
(142.5,0)*{}="B11"; (217.5,0)*{}="E11";
(142.5,20)*{}="B21"; (217.5,20)*{}="E21"; 
(142.5,40)*{}="B31"; (217.5,40)*{}="E31";
(142.5,60)*{}="B41"; (217.5,60)*{}="E41";
(142.5,80)*{}="B51"; (217.5,80)*{}="E51";
(142.5,100)*{}="B61"; (217.5,100)*{}="E61";
(155,0)*{}="C11"; (205,0)*{}="D11";
(155,100)*{}="C61"; (205,100)*{}="D61";
"A11"; "F11" **\dir{-}; 
"A61"; "F61" **\dir{-}; 
"A21"; "B21" **\dir{-}; 
"B21";(180,50) **\crv{(150,20.1) & (155,30) & (165,42) & (170,47) & (175,49.5) & (180,50)}; 
(180,50);"E21" **\crv{(180,50) & (185,49.5) & (190,47)  & (195,42) & (205,30) & (210,20.1)}; 
"A31"; "B31" **\dir{-}; 
"B31";(180,70) **\crv{(150,40.1) & (155,50) & (165,62) & (170,67) & (175,69.5) & (180,70)}; 
(180,70);"E31" **\crv{(180,70) & (185,69.5) & (190,67)  & (195,62) & (205,50) & (210,40.1)}; 
"A41"; "B41" **\dir{-}; 
"B41";(180,90) **\crv{(150,60.1) & (155,70) & (165,82) & (170,87) & (175,89.5) & (180,90)}; 
(180,90);"E41" **\crv{(180,90) & (185,89.5) & (190,87)  & (195,82) & (205,70) & (210,60.1)}; 
"A51"; "B51" **\dir{-}; 
"B51";(165.5,100) **\crv{(150,80.1) & (155,90)}; 
(194.5,100);"E51" **\crv{(205,90) & (210,80.1)}; 
(166,0);(180,10) **\crv{(170,7) & (175,9.5) & (180,10)}; 
(180,10);(194,0) **\crv{(180,10) & (185,9.5) & (190,7)}; 
"B11";(180,30) **\crv{(150,0.1) & (155,8) & (165,22) & (170,27) & (175,29.5) & (180,30)}; 
(180,30);"E11" **\crv{(180,30) & (185,29.5) & (190,27)  & (195,22) & (205,8) & (210,0.1)}; 
"E21"; "F21" **\dir{-}; 
"E31"; "F31" **\dir{-}; 
"E41"; "F41" **\dir{-}; 
"E51"; "F51" **\dir{-}; 
"A11"; "A61" **\dir{-}; 
"F11"; "F61" **\dir{-}; 
\endxy$}}
%h=0
\put(10,0){$h = 0$}
\put(8.7,3.8){
\rotatebox{-90}{
$\xy 0;/r.07pc/: 
(260,0)*{}="A12"; (360,0)*{}="F12";
(260,20)*{}="A22"; (360,20)*{}="F22"; 
(260,40)*{}="A32"; (360,40)*{}="F32";
(260,60)*{}="A42"; (360,60)*{}="F42";
(260,80)*{}="A52"; (360,80)*{}="F52";
(260,100)*{}="A62"; (360,100)*{}="F62";
(272.5,0)*{}="B12"; (347.5,0)*{}="E12";
(272.5,20)*{}="B22"; (347.5,20)*{}="E22"; 
(272.5,40)*{}="B32"; (347.5,40)*{}="E32";
(272.5,60)*{}="B42"; (347.5,60)*{}="E42";
(272.5,80)*{}="B52"; (347.5,80)*{}="E52";
(272.5,100)*{}="B62"; (347.5,100)*{}="E62";
(285,0)*{}="C12"; (335,0)*{}="D12";
(285,100)*{}="C62"; (335,100)*{}="D62";
"A12"; "F12" **\dir{-}; 
"A62"; "F62" **\dir{-}; 
"A22"; "B22" **\dir{-}; 
(290,0);(310,30) **\crv{(290,0) & (295,22) & (302,27) & (305,29.5) & (310,30)}; 
(310,30);(329,0) **\crv{(310,30) & (315,29.5) & (318,27)  & (325,22) & (329,0)}; 
"B22";(282,50) **\crv{(280,20.1) & (280,30)}; 
(339,50);"E22" **\crv{(340,30) & (340,20.1)}; 
(288,5);(310,50) **\crv{(289,20) & (295,42) & (302,47) & (305,49.5) & (310,50)}; 
(310,50);(331,5) **\crv{(310,50) & (315,49.5) & (318,47)  & (325,42) & (330,20)}; 
"A32"; "B32" **\dir{-}; 
"B32";(282,70) **\crv{(280,40.1) & (280,50)}; 
(338,70);"E32" **\crv{(340,50) & (340,40.1)}; 
(288,25);(310,70) **\crv{(289,40) & (295,62) & (302,67) & (305,69.5) & (310,70)}; 
(310,70);(331,25) **\crv{(310,70) & (315,69.5) & (318,67)  & (325,62) & (330,40)}; 
"A42"; "B42" **\dir{-}; 
"B42";(282,90) **\crv{(280,60.1) & (280,70)}; 
(338,90);"E42" **\crv{(340,70) & (340,60.1)}; 
(288,45);(310,90) **\crv{(289,60) & (295,82) & (302,87) & (305,89.5) & (310,90)}; 
(310,90);(331,45) **\crv{(310,90) & (315,89.5) & (318,87)  & (325,82) & (330,60)}; 
"A52"; "B52" **\dir{-}; 
"B52";(282,100) **\crv{(280,80.1) & (280,90)}; 
(338,100);"E52" **\crv{(340,90) & (340,80.1)}; 
"B12";(282,30) **\crv{(280,0.1) & (280,10)}; 
(338,30);"E12" **\crv{(340,10) & (340,0.1)}; 
(288,65);(296,100) **\crv{(289,80) & (291.5,90) & (296,100)}; 
(324,100);(331,65) **\crv{(324,100) & (328,90) & (330,80)}; 
(297,0);(310,10) **\crv{(300,7) & (307,9.5) & (310,10)}; 
(310,10);(323,0) **\crv{(310,10) & (313,9.5) & (320,7)}; 
"E22"; "F22" **\dir{-}; 
"E32"; "F32" **\dir{-}; 
"E42"; "F42" **\dir{-}; 
"E52"; "F52" **\dir{-}; 
"A12"; "A62" **\dir{-}; 
"F12"; "F62" **\dir{-}; 
"C12"; "C62"**\dir{-}; 
"D12"; "D62"**\dir{-}; 
\endxy$}}
\end{picture}
\caption{\label{fig:measuredwilson}  Flow lines of $H$ in the cylinders $\{h=const.\}$.}
\end{figure}

\begin{proposition}
\label{proposicionejemplo}
There exists a non-amenable F{\o}lner foliation $\mathcal{F}$ of a closed $4$-manifold $M$ with an ergodic transverse invariant volume.
\end{proposition}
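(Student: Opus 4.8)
The plan is to follow the recipe of Proposition~\ref{proposicionejemplofernando}, but to replace the Reeb component by the volume preserving plug $\mathcal{H}$ constructed above. The whole gain is that $\mathcal{H}$ carries a transverse invariant \emph{volume}, namely $\iota_H(d\theta\wedge\omega)$, so that the surgery will leave us with a transverse invariant volume on $\mathcal{F}$ and not merely with the finite transverse invariant measure obtained in the first example.

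First I would build the non-amenable model $\mathcal{F}_1$. Exactly as before, I suspend a faithful volume preserving action of a two generator free subgroup $\Gamma$ of $SL(2,\zz)$ on the transversal over the genus two surface $\Sigma_2$. This produces an ergodic foliation $\mathcal{F}_1$ of a closed $4$-manifold $M$, with no essential holonomy, with non-compact leaves of exponential growth (Cantor trees, see figure~\ref{fig:arbolescantor}), and carrying an ergodic transverse invariant volume $\nu$. As in the first example, $\mathcal{F}_1$ is non-amenable because its transverse structure is governed by the non-amenable group $\Gamma$.

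Next I would insert $\mathcal{H}$. I pick a region where $\mathcal{F}_1$ is a product foliation and replace it by the plug $(P,\mathcal{H})$ (thickened to the right codimension), glueing along $\partial P$ by means of the verticality of $\mathcal{H}$ near the boundary (condition (i) of Definition~\ref{deftrampas}). Two features of the plug are decisive. The entry-exit condition (iii) forces a leaf traversing $P$ to leave exactly in front of its entry point, so the holonomy pseudogroup, and hence the equivalence relation $\R$ together with its transverse measure class, is unchanged away from the trapped set; therefore $\mathcal{F}$ stays ergodic and, since the $\Gamma$-action survives on the saturated set where $\mathcal{F}$ and $\mathcal{F}_1$ agree, it stays \emph{non-amenable}. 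Moreover, since $\mathcal{H}$ preserves $d\theta\wedge\omega$, its transverse invariant volume matches $\nu$ across $\partial P$, and the two glue into a global \emph{transverse invariant volume} for $\mathcal{F}$; this is exactly the point of using Wilson's plug in place of a Reeb component.

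Finally I would check that $\mathcal{F}$ is F{\o}lner. By minimality every non-compact leaf crosses the plug region infinitely often, and at each of the trapped entries furnished by condition (iv) the corresponding plaques are replaced by arbitrarily long thin cylindrical fingers spiralling onto the circle leaves of $\mathcal{H}$ (compare figure~\ref{fig:measuredwilson}). Collecting the fingers near a fixed end of the leaf produces regions $V_n$ whose $(d-1)$-volume is controlled by their slender necks while their $d$-volume grows, so that $\mathrm{area}(\partial V_n)/\mathrm{vol}(V_n)\to 0$; since $\nu$ is invariant this is the ordinary F{\o}lner condition of \S~\ref{seccionfolner}. I expect the main obstacle to lie precisely in reconciling this with non-amenability: the insertion must be spread along a full transversal so that \emph{every} leaf --- not just the measure zero set of orbits trapped by a single plug, as in Wilson's original construction --- receives F{\o}lner tails, while the rerouting must not simplify the transverse dynamics enough to render $\R$ amenable. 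Theorem~\ref{teoremaseries} sharpens the tension: the leaves are forced to retain exponential growth, so the F{\o}lner exhaustions must arise from the slender inserted ends rather than from any global slowing down of leaf growth.
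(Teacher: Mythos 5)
Your proposal follows essentially the same route as the paper: suspend a non-amenable volume-preserving action over $\Sigma_2$, insert the volume-preserving Wilson plug (crossed with $\sss^1$ to reach the right dimensions) along a product neighbourhood of a holonomy-free loop times the whole transversal, use the entry--exit condition to keep the transverse dynamics --- hence non-amenability --- intact away from the plug, and use density of orbits to give the leaves arbitrarily long thin tubes, hence F{\o}lner exhaustions. One point needs repair, though. You take $\mathcal{F}_1$ to be the suspension of a free subgroup of $SL(2,\zz)$ acting on $\mathbb{T}^2$ and then invoke \emph{minimality} to argue that every leaf meets the trapped region arbitrarily closely; but that action is not minimal (rational points have finite orbits), so this step fails as stated. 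The paper avoids this by suspending a dense free subgroup of $SO(3)$ acting on $\sss^2$, which \emph{is} minimal, so that every leaf acquires F{\o}lner tails. Your version can be salvaged without changing the group: ergodicity of the $SL(2,\zz)$-action gives density of almost every orbit, and the definition of a F{\o}lner foliation only requires almost all leaves to be F{\o}lner --- but then you must say ``almost every leaf'' rather than ``every leaf by minimality''. Two smaller omissions: the matching of the plug's transverse invariant volume $\iota_H(d\theta\wedge\omega)$ with $\nu$ across the boundary is not automatic and is arranged in the paper via Moser's theorem; and it is worth recording, as the paper does, that the surgery creates new leaves (two tori and infinite cylinders), all of which are F{\o}lner and so harmless.
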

 
\begin{proof}
To construct the non-amenable two dimensional foliation $\mathcal{F}_1$ we will use again the suspension of the action of a non-amenable group, this time acting on the sphere. Let us begin by considering an oriented surface $\Sigma_2$ of genus two. 
%Take now a homomorphism 
%$\phi:\pi_1(\Sigma_2,x_0)\to SO(3)$ such that $\phi(\beta_1)=\phi(\beta_2) = Id$ and denote $\Gamma=\phi(\pi_1(\Sigma_2,x_0))$.
%For example, we can suppose that $\Gamma$ is a kleinian group of the first kind ({\it i.e.} whose limit set is the whole sphere $\sss^2$) generated by two elliptic transformations $\phi(\alpha_1)$ and $\phi(\alpha_2)$.
%Take now a homomorphism 
%$\phi:\pi_1(\Sigma_2,x_0)\to SL(2,\mathbb{C})$ where $\phi(\beta_1)=\phi(\beta_2) = Id$ and $\Gamma=\phi(\pi_1(\Sigma_2,x_0))$ is
%a kleinian group of the first kind ({\it i.e.} whose limit set is the whole sphere $\sss^2$) generated by two elliptic transformations $\phi(\alpha_1)$ and $\phi(\alpha_2)$.
Take now a homomorphism 
$\phi:\pi_1(\Sigma_2,x_0)\to SO(3)$ where $\phi(\beta_1)=\phi(\beta_2) = Id$ and $\Gamma=\phi(\pi_1(\Sigma_2,x_0))$ is a dense free subgroup of $SO(3)$.
As in the previous example,  
%by suspension of $\phi$, 
we obtain a foliation $\mathcal{F}_1$ of  the manifold $M=\Sigma_2 \times\sss^2$ such that
\smallskip 

\noindent
--  it has a transverse invariant volume, given by the canonical volume form $\Omega$ on $\sss^2$, with respect to whom $\mathcal{F}_1$ has no essential holonomy and is ergodic;
\smallskip 

\noindent
--  it is non-amenable with respect to this volume;
\smallskip 

\noindent
--   the leaves without holonomy are diffeomorphic to the Cantor tree. 
\smallskip 

\noindent
To make the leaves F{\o}lner we will insert a $2$-dimensional volume preserving foliated plug. 
\medskip 

\noindent
{\em -- Construction of the foliated plug.}

Fix a volume preserving plug $P=  \sss^1 \times  [1,2]  \times [-1,1]$  endowed with the
$1$-dimension foliation $\mathcal{H}$, as before. There is an embedding of  $P$  into the product $\sss^2 \times [-1,1]$ such that $H$ extends trivially to the whole product $\sss^2 \times [-1,1]$. We will abuse of the notation by keeping calling the new foliation $\mathcal{H}$. Obviously the vector field defining this foliations is volume preserving. By J. Moser's theorem from \cite{mose}, we can assume that the volume form induced on the entry region $\sss^2 \times \{-1\}$ coincides with the canonical volume form $\Omega$. 
\medskip 

Consider the manifold $Q= \sss^2 \times [-1,1] \times \sss^1$ foliated by $\mathcal{G} = \mathcal{H}\times\sss^1$, that possesses a transverse invariant volume given by the 
$2$-form $\iota_H(d\theta\wedge \omega)$. Observe that the entry region $\sss^2 \times\{-1\}  \times \sss^1$ is foliated by circles. The circles $(\theta,h,\pm 1, \beta)$, with $h\neq 0$ and $\beta\in \sss^1$, belong to the same leaf of the foliation $\mathcal{G}$.
\medskip

\noindent
{\em -- Insertion of the plug}

Let us find a place in $M$ to insert the plug. 
Consider the submanifold $T= \{x_0\}\times \sss^2$. Then $T$ is a transversal to $\mathcal{F}_1$ diffeomorphic to $\sss^2$. 
Consider also a tubular neighborhood $A \cong \sss^1 \times [-1,1]$ of the loop  $\beta_2$ passing through $x_0$. Since the holonomy  on $\beta_2$ is trivial, the trace of $ \mathcal{F}_1$ on the product 
$B = A \times T \cong \sss^1 \times [-1,1] \times \sss^2$ is conjugated to the horizontal foliation (whose leaves are the cylinders  $\sss^1 \times [-1,1] \times \{\ast\}$). As in the previous example, after reordering the factors, we can replace the trivial foliated subbundle $B$ by the plug $Q$. Obviously the resulting foliation $\mathcal{F}$ has a transverse invariant volume.
\medskip 

\noindent
{\em -- The foliation $\mathcal{F}$}

After the insertion of the plug $Q$, we have a $C^\infty$ foliation $\mathcal{F}$ of $M$ with a transverse invariant volume. We need to show that $\mathcal{F}$ is non-amenable and that all its leaves are F{\o}lner. For the first, notice that the equivalence relation on any transversal that does not intersects the inserted plug has not changed. Thus $\mathcal{F}$ is non-amenable.
\medskip 

For the second point, let us analyze the leaves of the foliation $\mathcal{F}$. The leaves that entry $Q$ in points with coordinates $(\theta,h,1,\beta)$ for $\theta, \beta \in \sss^1$ and $h\neq 0$ do not change: for $h\approx \pm \frac{1}{8}$ we just remove a cylinder and put the same back in its place. As $h\to 0$ the cylinder we glue back becomes longer, it turns inside $Q$ as the flow lines of $H$ in $P$ (see figure \ref{fig:measuredwilson}). For each leaf of $\mathcal{F}$ that meets points of coordinates $(\theta, 0, \pm 1,\beta)$ we remove a cylinder and glue back a semi-infinite cylinder $[0,\infty)\times \sss^1$ that corresponds to a flow line in the singular cylinder of $P$  spiralling and accumulating on the periodic orbit.
We have also created new leaves: two compact tori  corresponding to the periodic orbits and infinite cylinders that correspond to the flow lines in the singular cylinder of $P$ that lie between the two periodic orbits.
Clearly, all the leaves that meet this singular cylinder become F{\o}lner. Since 
$\mathcal{F}_1$ is minimal, every leaf becomes F{\o}lner: all the generic leaves are quasi-isometric to 
the  Cantor tree with tubes, see a portion in figure \ref{fig:arbolescantortubos}.
\end{proof}

\begin{figure}
\rotatebox{-90}{
\begin{picture}(3.5,10)(0.2,-0.6)
%medida1,8x1,8
%primerpantanlon
\put(1.6,0){$\xy 
(0,-3)*\ellipse(3,1){.}; 
(0,-3)*\ellipse(3,1)__,=:a(-180){-}; 
(-3,6)*\ellipse(3,1){.}; 
(-3,6)*\ellipse(3,1)__,=:a(-180){-}; 
(3,6)*\ellipse(3,1){-}; 
(-3,12)*{}="1"; %(inside 
(3,12)*{}="2"; %top ellipses) 
(-9,12)*{}="A2";%(Outside 
(9,12)*{}="B2"; %top ellipses) 
"1";"2" **\crv{(-3,7) & (3,7)};%(Top curve) 
(-3,-6)*{}="A0";%(sides of 
(3,-6)*{}="B0"; % bottom ellipse) 
(-3,1)*{}="A1";%(end points 
(3,1)*{}="B1"; % of straight lines) 
"A0";"A1" **\dir{-}; 
"B0";"B1" **\dir{-}; %straight lines 
"B2";"B1" **\crv{(8,7) & (3,5)}; 
"A2";"A1" **\crv{(-8,7) & (-3,5)}; 
\endxy$}
%primertubo
\put(1.6,1.3){\line(0,1){1.2}} 
\put(2.2,1.3){\line(0,1){1.2}} 
%segundopantalon
\put(1,3){$\xy 
(0,-3)*\ellipse(3,1){.}; 
(0,-3)*\ellipse(3,1)__,=:a(-180){-}; 
(-3,6)*\ellipse(3,1){.}; 
(-3,6)*\ellipse(3,1)__,=:a(-180){-}; 
(3,6)*\ellipse(3,1){-}; 
(-3,12)*{}="1"; %(inside 
(3,12)*{}="2"; %top ellipses) 
(-9,12)*{}="A2";%(Outside 
(9,12)*{}="B2"; %top ellipses) 
"1";"2" **\crv{(-3,7) & (3,7)};%(Top curve) 
(-3,-6)*{}="A0";%(sides of 
(3,-6)*{}="B0"; % bottom ellipse) 
(-3,1)*{}="A1";%(end points 
(3,1)*{}="B1"; % of straight lines) 
"A0";"A1" **\dir{-}; 
"B0";"B1" **\dir{-}; %straight lines 
"B2";"B1" **\crv{(8,7) & (3,5)}; 
"A2";"A1" **\crv{(-8,7) & (-3,5)}; 
\endxy$}
%segundotubo
\put(1,4.3){\line(0,1){3.6}} 
\put(1.6, 4.3){\line(0,1){3.6}} 
%tercerpantanlon
%segundopantalon
\put(0.4,7.9){$\xy 
(0,-3)*\ellipse(3,1){.}; 
(0,-3)*\ellipse(3,1)__,=:a(-180){-}; 
(-3,6)*\ellipse(3,1){-}; 
(3,6)*\ellipse(3,1){-}; 
(-3,12)*{}="1"; %(inside 
(3,12)*{}="2"; %top ellipses) 
(-9,12)*{}="A2";%(Outside 
(9,12)*{}="B2"; %top ellipses) 
"1";"2" **\crv{(-3,7) & (3,7)};%(Top curve) 
(-3,-6)*{}="A0";%(sides of 
(3,-6)*{}="B0"; % bottom ellipse) 
(-3,1)*{}="A1";%(end points 
(3,1)*{}="B1"; % of straight lines) 
"A0";"A1" **\dir{-}; 
"B0";"B1" **\dir{-}; %straight lines 
"B2";"B1" **\crv{(8,7) & (3,5)}; 
"A2";"A1" **\crv{(-8,7) & (-3,5)}; 
\endxy$}
\end{picture}}
\caption{\label{fig:arbolescantortubos} A portion of ``Cantor tree'' with tubes.}
\end{figure}
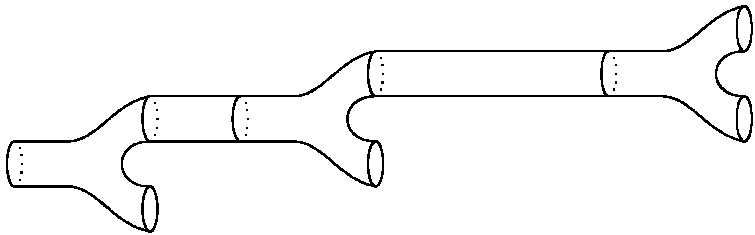

\begin{remarks} 
-- Both examples can be made real analytic. In the first case, we can use U. Hirsch's method from \cite{uhirsch} in order to insert a real analytic Reeb foliation. For the other one, as explained in \cite{ghyskupe} and \cite{2kupe}, plug insertion can be made in the real analytic  category.
\medskip 

\noindent
--  The second example has exactly two types of invariant (harmonic) probability measures: one is diffuse defined by the transverse volume and the other are combinations of atomic measures supported by the toric leaves. On the contrary, the first example has a countable infinite set of atomic invariant measures supported by compact leaves, one toric and the others hyperbolic.
\medskip 

\noindent
--  Note that the non compact leaves of both examples have exhaustive F{\o}lner sequen\-ces, which combine exhaustive sequences of geodesic balls and suitable sequences of bumps or tubes. An exhaustive F{\o}lner sequence is an increasing F{\o}lner sequence that exhaust the whole space. Kaimanovich gave examples of discrete measured equivalence relations with exhaustive F{\o}lner sequences that are non-amenable (we refer to example  4 in \cite{kaim}).
\end{remarks}

\section{Proofs}

The aim of the this section is to prove that
amenability and F\o lner notions are equivalent for minimal measured foliations. 
Our results may be interpreted as providing an effective version of the amenability criteria established by Kaimanovich \cite{kaimcomptes}  in terms of isoperimetric properties of the leaves: 

\begin{theorem} [Kaimanovich]  A discrete measured equivalence relation $\R$ on a standard Borel space $(T,\mathcal{B},\nu)$ is $\nu$-amenable if and only if for any non-trivial measurable set $T_0$  the induced equivalence relation $\R \mid T_0$ is $\delta$-F\o lner. 
\end{theorem}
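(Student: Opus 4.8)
The plan is to prove the two implications separately, and to phrase the F\o lner side in terms of an integrated, or \emph{field}, isoperimetric constant that is better adapted to the measurable setting. Throughout I work with the weighted measures $|\cdot|_y$ given by $|z|_y=\delta(z,y)$ as in Definition \ref{definciondetalfolner}. The cocycle identity $\delta(z,y')=\delta(z,y)\,\delta(y,y')$ shows that on a common class the measures $|\cdot|_y$ and $|\cdot|_{y'}$ are proportional; hence for a finite $A\subset\R[y]$ both the normalized probability measure $\pi_A=|A|_y^{-1}\,\mathbf 1_A\,|\cdot|_y$ and the ratio $|\partial A|_y/|A|_y$ are independent of the base point, so the Cheeger constant $h(\R[y])=\inf_A |\partial A|_y/|A|_y$ is a genuine class invariant and $\R[y]$ is $\delta$-F\o lner iff $h(\R[y])=0$. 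For a measurable finite-fibred $B\subset\R\mid T_0$ set $I(\R\mid T_0)=\inf_B \big(\int|\partial B_y|_y\,d\nu\big)\big/\big(\int|B_y|_y\,d\nu\big)$. The pointwise bound $|\partial B_y|_y\ge h(\R[y])\,|B_y|_y$ gives $I(\R\mid T_0)=\operatorname*{ess\,inf}_{y\in T_0}h(\R\mid T_0[y])$, and a saturation argument (replacing a positive-measure set of non-F\o lner classes by its $\R\mid T_0$-saturation, on which $h$ is constant along classes) upgrades this to the equivalence: \emph{$\R\mid T_0$ is $\delta$-F\o lner for every non-trivial $T_0$ iff $I(\R\mid T_0)=0$ for every non-trivial $T_0$}. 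This reformulation is what I would actually use.

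For the implication amenable $\Rightarrow$ F\o lner I first note that amenability passes to restrictions (condition the invariant mean field onto the classes of $\R\mid T_0$), so it suffices to show that an amenable $\R$ has $I(\R)=0$. I take the Reiter fields $\pi_n^y$ of Theorem \ref{teoremapromediable}(ii), written as densities $\pi_n^y=f_n^y\,|\cdot|_y$, and apply the coarea (layer-cake) identity: with $B_y^{(t)}=\{f_n^y>t\}$ one has $\int|B_y^{(t)}|_y\,d\nu$ expressed by a level-set integral of the mass, while the total edge-variation of $f_n$ across the finitely many generators $\Gamma_1$ controls $\int_0^\infty|\partial B_y^{(t)}|_y\,dt$. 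Because $\|\pi_n^y-\pi_n^{\gamma y}\|\to0$ in $L^1(\nu)$ for each generator $\gamma$, the integrated edge-variation is $o(1)$ relative to the mass, so some level $t$ yields a field with field-ratio tending to $0$, i.e.\ $I(\R)=0$. This is the weighted generalization of the Carri\`ere--Ghys argument, the $\delta$-weights absorbing the (possibly unbounded) cocycle along the generators.

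For the converse I argue by contraposition: assuming $\R$ is not amenable I must produce a non-trivial $T_0$ on which $I(\R\mid T_0)>0$, equivalently a \emph{uniformly} non-F\o lner induced subrelation ($h(\R\mid T_0[y])\ge\epsilon$ for a.e.\ $y\in T_0$). The naive attempt fails on purpose: non-amenability does \emph{not} force $I(\R)>0$ on the full transversal --- Kaimanovich's examples are non-amenable with a.e.\ class F\o lner, so $I(\R)=0$ --- and the point of quantifying over all $T_0$ is exactly to detect, after passing to a cleverly chosen non-saturated subset, the isoperimetric obstruction that is invisible globally. Concretely I would exhaust $T$ by an increasing family of saturated sets carrying amenable restrictions and pass to a maximal such saturated set $T_{\mathrm{am}}$ (its existence follows from stability of amenability under countable increasing unions and under restrictions); its complement $T_{\mathrm{nam}}$ then carries a relation that is amenable on no non-null saturated piece. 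On $T_{\mathrm{nam}}$ I would translate non-amenability into a spectral gap for the Markov operator of a tangential random walk on $L^2(\R,\widetilde\nu)$ and run a measured Cheeger inequality to extract a non-saturated $T_0\subset T_{\mathrm{nam}}$ with $I(\R\mid T_0)>0$.

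The step I expect to be the main obstacle is precisely this last extraction: converting the spectral gap produced by non-amenability into a genuine, \emph{uniform} isoperimetric lower bound on an induced subrelation, and in particular choosing the non-saturated subset $T_0$ correctly. The difficulty is structural rather than computational --- it is the phenomenon underlying Kaimanovich's counterexamples, namely that almost-invariant \emph{fields} of sets can exist (making $I(\R)=0$) while the corresponding per-class probability measures refuse to match across adjacent base points (making Reiter's criterion fail). Handling this requires the Cheeger inequality to be applied at the level of the restricted relation rather than the full one, so that the base-point variation obstructing invariance is converted into boundary that the uniform lower bound can see.
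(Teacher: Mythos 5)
There is a genuine gap, and in fact the paper itself does not prove this statement: it is quoted from Kaimanovich's note \cite{kaimcomptes}, and what the paper actually supplies is (a) the easy direction for the full relation (Proposition \ref{carriereghysquasi-invariante}, via hyperfiniteness, averaging over the finite subrelations $\R_n$, and Fatou), and (b) a substitute for the hard direction in the special foliated setting, where the quantifier ``for every $T_0$'' is traded for minimality and Cass's quasi-homogeneity theorem. Measured against that, your forward direction has the right shape (restriction preserves amenability, then derive F{\o}lner sets), but the coarea step conflates two different variations: Reiter's criterion, as in Theorem \ref{teoremapromediable}(ii)--(iii), controls the \emph{base-point} variation $\|\pi_n^y-\pi_n^{\gamma(y)}\|$, where both measures live on the whole common class, whereas the layer-cake bound on $\int_0^\infty|\partial B_y^{(t)}|_y\,dt$ requires the \emph{edge} variation $\sum_z|f_n^y(z)-f_n^y(\gamma z)|$ of a single function along the graphing inside one class. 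These are not comparable (a field of near-point-masses would have tiny edge mass but is not base-point almost invariant, and conversely), and the standard repair is exactly the paper's route through hyperfiniteness rather than through Reiter.

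The converse is where the theorem lives, and your proposal does not prove it --- you say so yourself when you flag the ``extraction'' as the main obstacle. The specific mechanism you propose cannot work as described: by your own identity $I(\R)=\operatorname*{ess\,inf}_y h(\R[y])$, any non-amenable relation with almost every class F{\o}lner (the Kaimanovich examples, and the foliations of \S\ref{contraejemplo}) has $I(\R)=0$, hence the global Markov operator has \emph{no} spectral gap; non-amenability of a measured equivalence relation is simply not equivalent to a Kesten-type gap on $L^2(\R,\widetilde{\nu})$, and the same obstruction persists verbatim on $\R\mid T_{\mathrm{nam}}$, which is again non-amenable yet may again have every class F{\o}lner. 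So ``spectral gap plus Cheeger on the restriction'' presupposes a gap that nothing in your argument produces; producing the right non-saturated $T_0$ \emph{is} the theorem. (For comparison, the paper's Theorems \ref{teoremamedidainvariante} and \ref{teoremamedidalisa} sidestep this entirely: minimality lets one transport a single F{\o}lner domain near every point of the leaf with dilatation close to $1$, which yields the Reiter sequences directly, with no choice of $T_0$ at all.)
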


In \S~\ref{demostracioncarriereghys} we will rewrite the proof of Carri\`ere and  Ghys' necessary condition, changing the transverse invariant measure for a tangentially smooth measure. In \S~\ref{seccionteoremamedidainvariante}, we will state D. Cass' theorem about minimal leaves, that will allow us to obtain the sufficient condition and to prove theorem \ref{teoremamedidainvariante}. Finally, in \S~\ref{seccionteoremamedidalisa} we will generalize the proof to tangentially smooth measures, thus proving theorem \ref{teoremamedidalisa}.

\subsection{Carri\`ere-Ghys' theorem for tangentially smooth measures}
\label{demostracioncarriereghys}

As we said in the introduction Carri\`ere and Ghys proved that if we
consider a foliation $\mathcal{F}$ of a closed manifold $M$, that is
endowed with a transverse invariant measure $\nu$ and has no essential
holonomy, the amenability of $\mathcal{F}$  implies that it is
F{\o}lner \cite{cagh}. In this section we are going to prove the necessary condition in theorem \ref{teoremamedidainvariante}, for which we do not need the minimality condition, following Carri\`ere-Ghys' method. 

\begin{proposition}
\label{carriereghysquasi-invariante}
Let $\R$ be a discrete measured equivalence relation on a standard Borel space $(T,\mathcal{B},\nu)$. Assume that $\nu$ is a quasi-invariant measure and $\R$ is $\nu$-amenable, then $\nu$-almost every equivalence class is $\delta$-F{\o}lner.
\end{proposition}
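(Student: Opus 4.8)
The plan is to follow the classical route from amenability to the isoperimetric (F\o lner) condition: produce an asymptotically invariant family of probability measures from Reiter's criterion, and then manufacture the F\o lner sets as superlevel sets of their densities via the coarea (layer-cake) formula. First I would invoke Theorem \ref{teoremapromediable}(ii)--(iii): $\nu$-amenability yields measurable families of probability measures $\{\pi_n^y\}$ on the classes $\R[y]$ with $\|\pi_n^y-\pi_n^z\|\to 0$ for $\widetilde\nu$-almost every $(y,z)\in\R$, together with the integrated form $\int_{\mathrm{Dom}(\gamma)}\|\pi_n^y-\pi_n^{\gamma(y)}\|\,d\nu(y)\to 0$ for each of the finitely many generators $\gamma\in\Gamma_1$. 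Since the reference measures $|\cdot|_y$ of Definition \ref{definciondetalfolner} on a fixed class are mutually proportional (indeed $|\cdot|_z=\delta(y,z)\,|\cdot|_y$ by the cocycle identity), I would write each $\pi_n^y$ as $f_n^y\,|\cdot|_y$ with $f_n^y\ge 0$ and $\int f_n^y\,d|\cdot|_y=1$.

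Next I would form, for each level $t>0$, the finite superlevel sets $A^y_{n,t}=\{z\in\R[y]:f_n^y(z)>t\}$. The Cavalieri identity gives the volume normalization
$$\int_0^\infty |A^y_{n,t}|_y\,dt=\int f_n^y\,d|\cdot|_y=1,$$
while, edge by edge over the generating set $\Gamma_1$, the same identity turns the integrated weighted boundary into a sum of weighted first differences of $f_n^y$:
$$\int_0^\infty |\partial A^y_{n,t}|_y\,dt\;\le\;\sum_{\gamma\in\Gamma_1}\sum_{z\in\R[y]}\delta(z,y)\,\big|f_n^y(z)-f_n^y(\gamma z)\big|.$$
The goal is then to show that the right-hand side is $o(1)$ relative to the volume after integrating over $y$.

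The heart of the argument is to bound these neighbor differences by the almost-invariance furnished by Reiter. Reindexing $z=\gamma w$ and using the Radon--Nikodym cocycle identity $\delta(\gamma w,y)=\delta(\gamma w,w)\,\delta(w,y)$, the inner sum becomes $\sum_w \delta(w,y)\,\delta(\gamma w,w)\,|f_n^y(w)-f_n^y(\gamma w)|$; the weight $|\cdot|_y=\delta(\cdot,y)$ is chosen precisely so that the distortion factors $\delta(\gamma w,w)$---the Jacobians of the partial transformations $\gamma$---are absorbed rather than accumulated. To convert the comparison of $f_n^y$ at neighboring \emph{points} into the comparison of $\pi_n^y$ at neighboring \emph{basepoints} that Reiter controls, I would insert $f_n^{\gamma y}$ (or, after a symmetrization of the field making $f_n^y(z)$ comparable to $f_n^z(y)$) and estimate by the triangle inequality, so that each term is dominated by $\|\pi_n^y-\pi_n^{\gamma y}\|$ up to bounded cocycle factors. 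Integrating over $y\in T$ against $\nu$, using the integrated Reiter condition and the finiteness of $\Gamma_1$, I would obtain $\int_T \big(\int_0^\infty|\partial A^y_{n,t}|_y\,dt\big)\,d\nu(y)\to 0$, whereas the volume integral is constantly $1$.

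Finally, to pass from this averaged statement to a $\nu$-almost everywhere F\o lner conclusion, I would, for each $n$ and $y$, select measurably (by a mean-value/Markov argument on the level $t$) a single level $t_n(y)$ for which $|\partial A^y_{n,t_n(y)}|_y/|A^y_{n,t_n(y)}|_y$ is controlled by the averaged defect, and then extract a subsequence along which this ratio tends to $0$ for $\nu$-almost every $y$ (a Borel--Cantelli / Fubini argument). The resulting finite sets $A_n:=A^y_{n,t_n(y)}$ would form, for almost every class, the desired $\delta$-F\o lner sequence. I expect the main obstacle to be the step reconciling the two flavors of almost-invariance: Reiter's criterion controls the basepoint change $\pi_n^y\rightsquigarrow\pi_n^{\gamma y}$, whereas the graph boundary naturally compares a single density at neighboring vertices. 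Bridging these---through the cocycle bookkeeping that justifies the $\delta$-weighting, and through an appropriate symmetrization of the Reiter field---together with the measurable selection of good levels, is the technical core that the quasi-invariant setting adds to the original invariant argument of Carri\`ere--Ghys.
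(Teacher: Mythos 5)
Your route is genuinely different from the paper's: you start from Reiter's criterion and try to run Namioka's layer-cake argument, extracting F{\o}lner sets as superlevel sets of the densities $f_n^y$. The paper instead uses Theorem \ref{teoremapromediable}(iv): amenability gives hyperfiniteness, the finite classes $\R_n[x]$ are taken directly as the candidate F{\o}lner sets, a mass-transport identity $\int \bar{f}\,d\nu=\int f\,d\nu$ (with $\bar f$ the $\delta$-weighted average of $f$ over $\R_n$-classes) shows that $\int \frac{|\partial\R_n[x]|_x}{|\R_n[x]|_x}\,d\nu(x)=\nu(\partial\R_n)\to 0$, and Fatou finishes. No layer-cake and no control of translation defects is ever needed.

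The step you yourself flag as the ``technical core'' is, unfortunately, a genuine gap and not a bookkeeping issue. The layer-cake bound requires smallness of the \emph{translation defect} $\sum_z\delta(z,y)\,|f_n^y(z)-f_n^y(\gamma z)|$ of a single measure $\pi_n^y$ under the partial transformation $\gamma$ acting on the class; Reiter's criterion controls only the \emph{basepoint defect} $\|\pi_n^y-\pi_n^{\gamma(y)}\|$. If you insert $f_n^{\gamma y}$ and apply the triangle inequality, the first term is indeed comparable to $\|\pi_n^y-\pi_n^{\gamma y}\|$ (after normalizing both densities against the same reference measure $|\cdot|_y$; note that normalizing against $|\cdot|_{\gamma y}$ leaves an error of size $|1-\delta(\gamma y,y)|$ which does not tend to $0$), but the remaining term is $\sum_z\delta(z,y)\,\bigl|\pi_n^{\gamma y}(\{z\})/\delta(z,y)-\pi_n^{y}(\{\gamma z\})/\delta(\gamma z,y)\bigr|$, which is exactly the translation defect you set out to bound. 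The degenerate case makes this vivid: if $\pi_n^y$ were literally constant in $y$ on each class, Reiter's condition would be satisfied with defect zero, yet a single arbitrary probability measure on an infinite class has no reason to have level sets with small isoperimetric ratio. Upgrading basepoint-almost-invariance to translation-almost-invariance is essentially equivalent in strength to the Connes--Feldman--Weiss hyperfiniteness theorem; it is not obtained by a symmetrization plus triangle inequality. (A secondary point: your ``bounded cocycle factors'' are not available here either, since for a general quasi-invariant $\nu$ the Radon--Nikodym cocycle $\delta(z,\gamma z)$ along generator edges need not be bounded; the paper's argument never needs such a bound.) To repair the proof you should either pass through hyperfiniteness as the paper does, or establish a Reiter family that is almost invariant under the partial transformations in the second variable, which is a substantially stronger input than condition (ii) or (iii) of Theorem \ref{teoremapromediable} as stated.
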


\begin{proof}
Since $\R$ is amenable it is hyperfinite, {\it i.e.} there exists an increasing  sequence of finite measured equivalence relations $\R_n$ on $(T, \mathcal{B},\nu)$ such that $\R[x]=\bigcup_n \R_n[x]$ for $\nu$-almost every $x \in T$. Fix $n$, and consider a measurable function $f$ on $T$. Define
$$
\bar{f}(x)=\frac{1}{|\R_n[x]|_x}\sum_{y\in \R_n[x]}f(y)\delta(y,x).
$$
Then, for every $y\in \R_n[x]$ we  have that
$$
\bar{f}(y) = \frac{1}{|\R_n[x]|_y}\sum_{z\in \R_n[x]}f(z)\delta(z,x)\delta(x,y) = 
  \frac{\sum_{z\in \R_n[x]}f(z)\delta(z,x)\delta(x,y)}{\sum_{z\in \R_n[x]}\delta(z,x)\delta(x,y)} = \bar{f}(x).
$$
Hence, we have that 
\begin{eqnarray*}
\int\bar{f}(x)d\nu(x) & =  & \int\frac{1}{|\R_n[x]|_x}f(y)\delta(y,x)d\widetilde{\nu}(x,y) \\
	& = & \int\frac{1}{|\R_n[x]|_x}f(y)\delta(y,x)\delta(x,y)d\widetilde{\nu}(y,x) \\
	& = & \int\frac{f(y)\delta(y,x)\delta(x,y)}{\sum_{z\in \R_n[x]}\delta(z,y)\delta(y,x)}d\widetilde{\nu}(y,x) \\
          &  =  & \int f(y)  \frac{\sum_{x\in \R_n[y]} \delta(x,y)}{\sum_{z\in\R_n[x]}
	\delta(z,y)}d\nu(y) \quad  = \quad  \int f(y) d\nu(y).
\end{eqnarray*}
Denote by $\partial \R_n=\bigcup_{x\in T}\partial \R_n[x]$, and take 
$f_n=\chi_{\partial\R_n}$, the characteristic function. We have that
$$
\bar{f}_n(x) = \frac{1}{|\R_n[x]|_x} \sum_{y\in \R_n[x]}f_n(y)\delta(y,x) = \frac{|\partial \R_n[x]|_x}{|\R_n[x]|_x} 
$$
and therefore
$$
\int \bar{f}_n(x)d\nu(x) =  \int f_n(y) d\nu(y)=\nu(\partial \R_n).
$$
Since $\R[x]=\bigcup_n\R_n[x]$ for $\nu$-almost every $x \in T$, we get that $\nu(\partial \R_n)$ converges to zero as $n\to \infty$. By Fatou's lemma, 
$$
\liminf_{n\to\infty} \bar{f}_n (x)= 
\liminf_{n\to\infty} \frac{|\partial 
\R_n[x]|_x}{|\R_n[x]|_x} = 0
$$
$\nu$-almost everywhere. 
\end{proof}

The previous proposition implies that a foliation $\mathcal{F}$ of a
compact manifold $M$ that is amenable with respect to a
quasi-invariant measure $\nu$  induces a $\delta$-F{\o}lner
equivalence relation on any total transversal.  Since a $\delta$-F\o
lner equivalence class corresponds to an $\eta$-F\o lner leaf, we have
 the following continuous version (that we can also deduce from the hyperfiniteness of the equivalence relation $\R_M$ on the ambient manifold $M$):

\begin{proposition}
\label{carriereghystangencial}
Let $(M,\mathcal{F})$ be a compact foliated manifold endowed with a tangentially smooth measure $\mu$. Assume that $\mathcal{F}$ has no essential holonomy and is $\mu$-amenable. Then $\mathcal{F}$ is $\eta$-F{\o}lner. 
\end{proposition}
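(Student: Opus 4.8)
The plan is to reduce the statement to its discrete counterpart, Proposition~\ref{carriereghysquasi-invariante}, and then translate the combinatorial F{\o}lner property on a transversal into the leafwise isoperimetric property of Definition~\ref{definiconfolner}. First I would recall that, by the definition of amenability for foliations and the Connes--Feldman--Weiss theorem, the $\mu$-amenability of $\mathcal{F}$ is equivalent to the $\nu$-amenability of the induced discrete equivalence relation $\R$ on a total transversal $T$, where $\nu$ is the transverse measure induced by $\mu$. This $\nu$ is quasi-invariant, and its Radon--Nikodym cocycle $\delta$ is precisely the transverse measure distortion encoded by the modular form $\eta$: up to the chosen normalization, $\delta(z,y)=h(z)/h(y)$ for $z\in\R[y]$, the value at $z$ of the normalized density $h/h(y)$ defining the modified metric. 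Applying Proposition~\ref{carriereghysquasi-invariante} then yields that $\nu$-almost every class $\R[y]$ is $\delta$-F{\o}lner, i.e. there are finite sets $A_n\subset\R[y]$ with $|\partial A_n|_y/|A_n|_y\to 0$.

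The substantive step is to upgrade this to the $\eta$-F{\o}lner property of the leaf $L_y$, as asserted in the last item of Remarks~\ref{hacotada}. For this I would associate to each vertex $z\in\R[y]$ the plaque $P_z$ of the leaf meeting $T$ at $z$ and set $V_n=\bigcup_{z\in A_n}P_z$, a compact domain with piecewise smooth boundary in $L_y$. The key comparisons are $\mbox{vol}_h(V_n)\asymp|A_n|_y$ and $\mbox{area}_h(\partial V_n)\lesssim|\partial A_n|_y$, with constants independent of $n$ and $y$. The first holds because, by the tangential Lipschitz bound on $\log h$ (the Harnack estimate recalled in Remarks~\ref{hacotada}), the density $h/h(y)$ varies by a bounded factor across each plaque, so that $\int_{P_z}(h/h(y))\,d\mbox{vol}\asymp (h(z)/h(y))\,\mbox{vol}(P_z)\asymp\delta(z,y)$ uniformly in $z$, using that the Riemannian $d$-volume of plaques is bounded above and below by compactness of $M$; summing over $z\in A_n$, with the bounded plaque overlaps controlled by the finite-type atlas, gives the volume comparison. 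For the second, the geometric boundary $\partial V_n$ lies in the union of those plaque-boundary pieces not glued to an adjacent plaque indexed by $A_n$; these are indexed by $\partial A_n$, and each contributes modified area comparable to $\delta(z,y)$ since the $(d-1)$-volume of plaque boundaries is uniformly bounded. Combining the two comparisons gives $\mbox{area}_h(\partial V_n)/\mbox{vol}_h(V_n)\lesssim|\partial A_n|_y/|A_n|_y\to 0$, so $L_y$ is $\eta$-F{\o}lner.

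I expect the main obstacle to be precisely this geometric translation: controlling the leafwise boundary area $\mbox{area}_h(\partial V_n)$ by the purely combinatorial quantity $|\partial A_n|_y$, and checking that all comparison constants can be chosen uniformly in the leaf and in $n$. This uniformity is exactly where the hypotheses enter, through compactness of $M$ (uniform bounds on plaque $d$-volumes and boundary $(d-1)$-volumes) and through the boundedness of the modular distortion that makes $h/h(y)$ essentially constant on each plaque. As an alternative that avoids passing to a transversal, one could run the argument of Proposition~\ref{carriereghysquasi-invariante} directly on the equivalence relation $\R_M$ on $M$ equipped with the Haar system given by the leafwise Riemannian volume: hyperfiniteness of $\R_M$ produces an exhausting sequence of finite subrelations whose fibrewise boundaries have modified measure tending to zero, which again yields the $\eta$-F{\o}lner domains directly on the leaves.
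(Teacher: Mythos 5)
Your argument is essentially the paper's own: the authors also deduce this proposition directly from Proposition~\ref{carriereghysquasi-invariante} together with the correspondence between $\delta$-F{\o}lner classes and $\eta$-F{\o}lner leaves recorded in Remarks~\ref{hacotada}, and they likewise mention the alternative via hyperfiniteness of $\R_M$ on $M$. You merely spell out the plaque-by-plaque volume and boundary-area comparisons that the paper leaves implicit in that remark, which is a faithful (and somewhat more detailed) rendering of the same proof.
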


%%%%%%%%%%%%%%%%%%%%%%%%%%%%%%%%%%%%%%%%%%%%%%%%
\subsection{The invariant measure case: proof of theorem \ref{teoremamedidainvariante}}
\label{seccionteoremamedidainvariante}

In this section we will give a proof of the second implication in theorem \ref{teoremamedidainvariante}, where we will use the minimality condition. We will start by stating a theorem by Cass \cite{cass}, that is a key ingredient in the proof.
As before we will fix a Riemannian metric $g$ on the compact foliated manifold $(M,\mathcal{F})$. Given a point $p\in M$, define $B_M(p,A)$ as the ball with center at $p$ and radius $A$, and $B(p,A)$ the same restricted to the leaf $L_p$ through $p$. We need the following definition.

\begin{definition}\label{defquasiho}
We say that a leaf $L$ of the foliated manifold $(M,\mathcal{F})$ is
\smallskip 

\noindent
-- {\em  recurrent} if for every $\epsilon>0$, there exists $A>0$ such that
$L\subset B_M(B(p,A), \epsilon)$, 
where $A$ depends only on $\epsilon$, and $B_M(B(p,A),\epsilon)$ is the $\epsilon$-neighborhood of the ball $B(p,A)$.
\smallskip 

\noindent
-- {\em quasi-homogeneous} if there exists $k\geq 1$ such that for all $a>0$ and $q\in L$, there exists $A>0$, such that for every ball $B(p,a)$ there exists an immersion $f: B(p,a)\to B(q,A)$, with dilatation bound $k \geq 1$.
\end{definition}

\begin{theorem}[Cass]
\label{teoremahojasminimales}
Let $L$ be a leaf of a foliated compact manifold $(M,\mathcal{F})$. Then $L$ is minimal if and only if one of the following conditions is satisfied:
\smallskip 

\noindent 
(i) $L$ is recurrent;
\smallskip 

\noindent 
(ii) $L$ is quasi-homogeneous.
\end{theorem}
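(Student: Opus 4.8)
The theorem asserts that a leaf $L$ of a compact foliated manifold is minimal (dense in its closure, i.e., the foliation restricted to $\overline{L}$ is minimal) if and only if $L$ is recurrent, equivalently quasi-homogeneous. I would organize the proof around a cycle of implications, establishing (minimal $\Rightarrow$ recurrent), (recurrent $\Rightarrow$ quasi-homogeneous), and (quasi-homogeneous $\Rightarrow$ minimal), which simultaneously yields the equivalence of (i) and (ii). The two properties live at different scales: recurrence is a metric/topological statement about how an intrinsic ball $B(p,A)$ fills up the ambient closure $\overline{L}$, while quasi-homogeneity is a uniform comparison of the intrinsic geometry of balls around different points. The compactness of $M$ together with the uniformly bounded geometry of the foliated charts (the plaques have $d$-volume and boundary $(d-1)$-volume bounded above and below, as fixed in the preliminaries) is what will convert qualitative density into the quantitative uniform bounds demanded by quasi-homogeneity.

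\textbf{Minimality implies recurrence.} Fix $\epsilon>0$. Minimality of $L$ means $\overline{L}$ is a minimal set, so $L$ is dense in $\overline{L}$. I would cover the compact set $\overline{L}$ by finitely many foliated charts and use a standard plaque-chain (holonomy) argument: density guarantees that from a fixed $p$ one can reach an $\epsilon$-net of $\overline{L}$ by leafwise paths, and compactness bounds the number of plaques needed, hence bounds the intrinsic radius $A$ uniformly in terms of $\epsilon$ alone. The key point is that $A$ depends only on $\epsilon$ and not on the base point; this uniformity is exactly what minimality (as opposed to mere density of a single leaf) provides, since $\overline{L}$ is a compact minimal set on which the recurrence time is uniform.

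\textbf{Recurrence implies quasi-homogeneity.} Given $a>0$ and $q\in L$, I would use recurrence to guarantee that the leaf through $q$ re-enters every chart that meets $B(p,a)$, and then build the immersion $f:B(p,a)\to B(q,A)$ by transporting plaques along holonomy. The dilatation bound $k$ comes from the quasi-isometry between the leafwise metric and the plaque-chain combinatorial structure, which is uniform over $M$ by compactness and bounded geometry; this is where I expect the main technical work and the \emph{principal obstacle} to lie, since one must produce a genuine immersion (not merely a map on vertex sets) with a bound $k$ independent of $p$, $q$, and $a$, controlling how the holonomy distorts the transported metric. The converse, quasi-homogeneity implies minimality, is comparatively soft: a uniform immersion of arbitrarily large balls of $L$ into a neighborhood of any prescribed $q\in L$ forces every point of $\overline{L}$ to be accumulated by $L$, since any plaque pattern appearing somewhere in $L$ must reappear with bounded distortion near $q$, giving density of $L$ in $\overline{L}$ and hence minimality. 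Throughout I would lean on the bounded-geometry hypotheses to keep all constants uniform, and I would expect the statement to hold for the holonomy covers $\widetilde{\mathcal F}$ as well, matching the setting in which it is later applied.
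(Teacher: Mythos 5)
There is nothing in the paper to compare your argument against: the authors explicitly decline to prove this statement (``We will not give a proof of this theorem here, we refer the reader to the original paper \cite{cass}''), and only add a remark about the quantifiers in quasi-homogeneity and the fact that the dilatation can be taken close to $1$. So your proposal has to be judged on its own. The two forward directions are outlined plausibly and in the spirit of Cass's argument: minimality of the closure plus compactness does give the uniformity of $A$ in $p$ (for each $x\in\overline{L}$ a radius works on a neighborhood of $x$, and a finite subcover makes it uniform), and the passage from recurrence to quasi-homogeneity by lifting a plaque chain covering $B(p,a)$ to a nearby parallel copy inside $B(q,A)$ is indeed where the technical work lies; you correctly flag that producing a genuine immersion with controlled dilatation is the crux.

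The genuine gap is in your converse, quasi-homogeneity $\Rightarrow$ minimality, which you dismiss as ``comparatively soft.'' What you actually argue there is that every point of $\overline{L}$ is accumulated by $L$, i.e.\ that $L$ is dense in $\overline{L}$ --- but that is a tautology, true for any leaf whatsoever, and it is not what minimality means. Minimality of $L$ means that $\overline{L}$ is a minimal set: it contains no nonempty proper closed saturated subset, equivalently \emph{every} leaf contained in $\overline{L}$ is dense in $\overline{L}$. To prove this you must use the uniformity of the constant (the fact that $A$ depends on $\epsilon$, or on $a$ and $q$, but not on $p$) against a hypothetical proper minimal subset $K\subset\overline{L}$. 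Since $K$ is saturated and proper, $L\cap K=\emptyset$; fix $q\in L$ with $d_M(q,K)=\epsilon_0>0$ and choose $p\in L$ so close to $K$ that the entire intrinsic ball $B(p,A)$ (for the $A$ furnished by recurrence with $\epsilon<\epsilon_0/2$) remains in the $\epsilon$-neighborhood of $K$ --- possible because leafwise balls of a fixed radius depend continuously on their center and the corresponding ball centered in $K$ stays in $K$. Then $B_M(B(p,A),\epsilon)$ misses $q$, contradicting recurrence. Some argument of this shape (playing uniformity against a point of $L$ that is deep inside a neighborhood of $K$) is indispensable; as written, your converse proves nothing beyond the definition of closure.
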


We will not give a proof of this theorem here, we refer the reader to the original paper \cite{cass}. However, we will like to make a remark about the quantifiers involved in the quasi-homogeneity that we will use later. First,  the dilatation of the map $f$ can be taken arbitrarily close to one.  In fact,  the map $f$ is constructed inside a distinguished open set: it maps the ball $B(p,a)$ to a domain contained in $B(q,A)$ that may not contain the point $q$. The dilatation bound implies that
$$
\frac{\mbox{vol}(f(B(p,a))}{\mbox{vol}(B(p,a))} \qquad \mbox{and} \qquad \frac{\mbox{area}(\partial f(B(p,a))}{\mbox{area}(\partial B(p,a))}
$$
are bounded by a constant that depends only on $k$ and the dimension of $\mathcal{F}$.
The main idea is that if we have a minimal leaf without holonomy, then any compact domain in it can be lifted to the nearby leaves with controlled distortion.

\begin{proof}[Proof of Theorem~\ref{teoremamedidainvariante}]
The steps of the proof are the following: first, theorem \ref{teoremahojasminimales} tell us that the leaves are quasi-homogeneous. Secondly, we will construct sequences of probability measures satisfying Reiter's criterion for amenability as in theorem \ref{teoremapromediable}.  
First of all, fix a foliated atlas $\mathcal{A}$ and recall that $d$ is the dimension of $\mathcal{F}$.
Consider a generic leaf $L$. Then, $L$ is a minimal F{\o}lner  leaf. Since it is F{\o}lner, there exist a sequence of compact submanifolds $V_n$ of dimension $d$
such that
$$
\lim_{n\to\infty}\frac{\mbox{area}(\partial V_n)}{\mbox{vol}(V_n)}= 0.
$$
For any point $p\in V_n$ there exist $a_p>0$ such that $V_n \subset B(p,a_p)$. Put $p_n\in V_n$, such that $a_{p_n}=a_n$ is minimal. Take a point $q\in L$, we claim that there exists a sequence of compact submanifolds $W_n^q$ of dimension $d$,
containing $q$, such that
$$
\frac{\mbox{area}(\partial W_n^q)}{\mbox{vol}(W_n^q)}\to 0
$$
as $n\to\infty$. 
To make the notation simpler, we will forget for a moment the index $n$. Beginning with a submanifold $V^p\subset L$, containing the point $p$, we will construct a submanifold $W^q\subset L$ around any given point $q\in L$, whose isoperimetric ratio is comparable to the one of $V^p$. 
By assumption the leaf $L$ is minimal. Using the quasi-homogeneous hypothesis, we have that there exist $A>0$ and an immersion
$f:B(p,a)\to B(q,A)$
into any ball $B(q,A)\subset L$, with  dilatation bound $k \geq 1$.  According to Cass' proof, this dilatation bound $k$ can be taken arbitrarily close to $1$.
Put $V^q=f(V^p)$. Then there exist constants $c$ and $C$, depending only on $k$ and $d$, such that
$$
c \leq \frac{\mbox{vol}(V^q)}{\mbox{vol}(V^p)} \leq C,
$$
and the same is valid for the areas of the boundaries of $V^p$ and $V^q$.
\medskip 

In order to visualize better this construction, we can assume that $B(p,a)$ has trivial holonomy. In this case, $B(p,a)$ can be lifted to nearby leaves, that is, there is a distinguished open set 
$U \cong P\times T$ such that $V^p \subset B(p,a) \subset P\times \{p\}$. On the other hand, 
since $\mathcal{F}$ is minimal, $T$ is a total transversal, so there are constants 
$r \geq a$ and $R$ such that the distance from $q$ to $T \cap L$ is smaller that $R$ and the distance between two points of $T \cap L$ is greater than $r$. Then $A$ may be taken close to $R + a$ and $V^q = f(V^p)$ is a lifted copy of $V^p$ included in the plaque $P \times \{f(p)\} \subset U$ passing through a point $f(p) \in T$ whose distance to $q$ is $\leq R$. 
\medskip 

Returning to the general case, observe that $V^q$ may not contain $q$. To overcome this difficulty let $W^q=V^q \cup P_q$, where $P_q$ is a plaque of the atlas $\mathcal{A}$ containing $q$.
Associated to $V^p$ consider the $d$-current 
$$
\xi^p(\alpha) = \frac{1}{\mbox{vol}(V^p)}\int_{V^p}\alpha,
$$
where $\alpha$ is a differential $d$-form on $M$. 

Let us come back to the F{\o}lner sequence. Starting with the sequence $V_n^p$ 
we obtain the sequence $W_n^q$ such that $q\in W_n^q$. The sequences $V_n^p$ and $W_n^q$ of submanifolds define the sequences of $d$-currents
$$
 \xi_n^p(\alpha) = \frac{1}{\mbox{vol}(V_n^p)}\int_{V_n^p}\alpha \qquad \mbox{and} \qquad 
\xi_n^q(\alpha) =  \frac{1}{\mbox{vol}(W_n^q)}\int_{W_n^q}\alpha,
$$that give rise to sequences of probability measures $\pi_n^p$ and $\pi_n^q$ on $L$, respectively. 
Using Reiter's criterion, we know that for proving the amenability of the foliation $\mathcal{F}$ we need to prove that $\|\pi_n^p-\pi_n^q\|\to 0$, or equivalently that 
$M(\xi_n^p-\xi_n^q)\to 0$,
as $n\to\infty$, where $M$ denotes the mass of a current.  We have that
\begin{eqnarray*}
M(\xi_n^p-\xi_n^q) & \leq & \sup_{ \|\alpha\|=1}\left( 
\frac{1}{\mbox{vol}(V_n^p)}\int_{V_n^p}(Id-f_n^*)\alpha\right) \\
& + & M\left(\frac{|\mbox{vol}(V_n^p)-\mbox{vol}(W_n^q)|}{\mbox{vol}(V_n^p)\mbox{vol}(W_n^q)}\int_{V_n^q}
\alpha\right)
\quad + \quad \frac{\mbox{vol}(P_q)}{\mbox{vol}(W_n^q)} 
\end{eqnarray*}
converge to $0$ as $n\to\infty$. The convergence of the last two terms follows because the volume of $P_q$ is constant and the volume of $V_n^p$ and $W_n^q$ tend to be the same. For the convergence of the first term, though it is not strictly necessary, we can express any $d$-form $\alpha$ as a function multiplied the Riemannian volume on the leaves. The integrals in the first term may now be reduced to integrals of functions.  Assuming again that $V_n^p$ has no holonomy, it is easy to see that the map $f_n : V_n^p \to U_n$ is close to the inclusion map of $V_n^p$ into $U_n$. In general, as in the former case, Cass' theorem provides a map $f_n : V_n^p \to M$ which is  locally (and then globally) close to the inclusion map of $V_n^p$ into $M$. This implies the convergence. Hence, $\|\pi_n^p-\pi_n^q\|\to 0$. The above construction can be done for every point $q \in L$ and for any minimal F{\o}lner leaf $L$ of $\mathcal{F}$. Thus $\mathcal{F}$ satisfies Reiter's criterion, {\it 
 i.e} it is  amenable.
\end{proof}

\begin{remark} 
In fact, Cass' theorem allowed us to remove the holonomy assumption and to prove the theorem for general foliations. On the other hand, if $\mathcal{F}$ has no essential holonomy, we can construct the F{\o}lner sequence $\{W_n^q\}_{n \in \nn}$
without using Cass' theorem. For every point $p$ in a leaf without holonomy, we can assume that each set $V_n^p$ is included in a plaque $P_n \times \{p\}$ of a sequence of  distinguished open sets $U_n \cong P_n \times T_n$ satisfying that $T _{n+1} \subset T_n$. Since $\mathcal{F}$ is minimal, each transversal $T_n \subset U_n$ is a total transversal. Observe that the traces of the leaves are the equivalence classes of an increasing sequence of  equivalence relations $\R_n$ induced by $\mathcal{F}$ on $T_n$. Moreover, if we replace the Riemannian volume by the discrete volume defined in \cite{plan} ({\it i.e.} the volume of a compact set $V$ is the number of vertices of  $V \cap T_0 \subset \R_0[x]$), there is no dilatation when we lift a set transversally into another place of the leaf. In terms of the corresponding graphs, the quasi-homogeneity of the leaves implies that every subgraph of diameter smaller than $ a$ has a copy inside every ball of diameter greater than $ A$. Therefore, it is possible to write the proof from the discrete perspective. This proof is also valid for $C^0$ laminations having no essential holonomy. 
\end{remark}

%%%%%%%%%%%%%%%%%%%%%%%%%%%%%%%%%%%%%%%%%%%%%%%%%
\subsection{The tangentially smooth measure case: proof of theorem \ref{teoremamedidalisa} }
\label{seccionteoremamedidalisa}

In this section we are concerned with the second implication in theorem \ref{teoremamedidalisa}. 
Here $\mathcal{F}$ is a minimal foliation of a compact manifold $M$,
equipped with a tangentially smooth measure $\mu$ whose generic leaves
are $\eta$-F\o lner with trivial holonomy. 
In order to prove the theorem, we need to deal with the case where $h$ is a measurable function in the transverse direction such that in any distinguished open set $\frac{h}{h(p)}$ is bounded. This is the case when $\mu$ is a harmonic measure 
(see remarks~\ref{hacotada}).  
Recall that $h$ is of class $C^{r-1}$ in the tangent direction, provided that the foliation is of class $C^r$.  

\begin{proof}[Proof of Theorem~\ref{teoremamedidalisa}] Consider a generic leaf $L$ of $\mathcal{F}$, which is minimal, $\eta$-F{\o}lner and has no holonomy. Since $L$ is minimal, it is quasi-homogenous in the sense of definition \ref{defquasiho}. Take an $\eta$-F{\o}lner sequence on $L$, that is, a sequence $\{V_n\}_{n \in \nn}$ of compact domains with boundary such that
$$
\frac{\mbox{area}_h(\partial V_n)}{\mbox{vol}_h(V_n)}\to 0.
$$
As before, since the holonomy of $L$ is trivial, we can suppose that $V_n$ is contained in a plaque of a distinguished open set $U_n=P_n\times T_n$. It is important to observe that the minimality of $L$ implies that the transversal $T_n$ is total.
Let $p_n\in T_n\cap V_n$ we will call $V_n^p=V_n$ to distinguish these
points (we will not write the index for the points $p_n$ to make the
notation easier). For $q\in L$ let 
$$W_n^q=V_n^{f_n(p)}\cup P_q,$$ 
where $f_n$ is the map given in the proof of 
theorem~\ref{teoremamedidainvariante} and $P_q$ is a plaque of $\mathcal{A}$ containing $q$.
Consider the $d$-currents defined by
$$ \xi_n^p(\alpha) = \frac{1}{\mbox{vol}_h(V_n^p)}\int_{V_n^p}\frac{h}{h(p)}\alpha \qquad \mbox{and} \qquad 
\xi_n^q(\alpha) = \frac{1}{\mbox{vol}_h(W_n^q)}\int_{W_n^q}\frac{h}{h(f_n(p))}\alpha,
$$
for any differential $d$-form $\alpha$. They will be identified with
the corresponding probability measures on $L$. According to the proof in section \ref{seccionteoremamedidainvariante}, we have to prove that the mass of the difference of the two integral currents converges to zero, to conclude that $\mathcal{F}$ is $\mu$-amenable. 
\medskip 

For each distinguished open set $U_ n \cong P_n \times T_n$, we have that
$$
\mu(U_n)  =  \int_{T_n} \int_{P_n \times\{y\}}h(x,y)d\mbox{vol}^y(x,y)d\nu(y)  <  + \infty.
$$
This implies that the function $h|_{U_n}$ is integrable for the
measure $d\mbox{vol}^y \otimes d\nu$ and  its $L_1$-norm is equal
to $\mu(U_n) < + \infty$. Thus $h|_{U_n}$ can be
approximated in the $L^1$-norm (with respect to the measure $d\mbox{vol}^y \otimes d\nu$) by a monotone increasing sequence of continuous functions $h_{n,m}$ with compact support in $U_n$, that is
$$
 \|h|_{U_n}-h_{n,m}\|_1 = \int_{T_n} \int_{P_n}|h| _{U_n}-h_{n,m}|dvol^yd\nu(y) \to 0
$$
when $m\to \infty$. Moreover, since the function
$\frac{h}{h(p)}$ is bounded in any distinguished open set $U_n$, it
is also integrable. 
Hence, for each $U_n$, we can approximate  $\frac{h}{h(p)}$ in
the $L^1$-norm by a monotone increasing sequence of continuous
functions of the form $\frac{h_{n,m}}{h_{n,m}(p)}$ with compact support in $U_n$. Now,  for every pair $(n,m)$, we have that
$\Big\|  \frac{h}{h(p)} - \frac{h_{n,m}}{h_{n,m}(p)} \Big\|_1$ is uniformly bounded.
\medskip 

Write $p=(x,y)$, $f_n(p)=(x,y^\prime)$ and $q=(u,v)$ where the first
coordinate is in the plaque and the second one in the transversal, and
we omitted the indexes for $p$ and $(x,y)$. Let us begin by fixing the indexes $n, m$ and expressing the difference of the two currents:
\begin{eqnarray*}
\xi_n^p-\xi_n^q & = & \frac{1}{\mbox{vol}_h(V_n^p)}\int_{V_n^p}\frac{h}{h(p)}\alpha       
     -\frac{1}{\mbox{vol}_h(W_n^q)}\int_{W_n^q}\frac{h}{h(f_n(p))}\alpha\\
& = & \frac{1}{\mbox{vol}_h(V_n^p)}\int_{V_n^p}\frac{h}{h(p)}\alpha
     -\frac{1}{\mbox{vol}_h(V_n^p)}\int_{V_n^p}\frac{h_{n,m}}{h_{n,m}(p)}\alpha \\
& + & \frac{1}{\mbox{vol}_h(V_n^p)}\int_{V_n^p}\frac{h_{n,m}}{h_{n,m}(p)}\alpha
     -\frac{1}{\mbox{vol}_h(W_n^q)}\int_{V_n^{f_n(p)}}\frac{h_{n,m}}{h_{n,m}(f_n(p))}\alpha\\
& + & \frac{1}{\mbox{vol}_h(W_n^q)}\int_{V_n^{f_n(p)}}\frac{h_{n,m}}{h_{n,m}(f_n(p))}\alpha
     -\frac{1}{\mbox{vol}_h(W_n^q)}\int_{V_n^{f_n(p)}}\frac{h}{h(f_n(p))}\alpha \\
& - &  \frac{1}{\mbox{vol}_h(W_n^q)}\int_{P_q}\frac{h}{h(f_n(p))}\alpha.
\end{eqnarray*}
Let
$$
\xi^p_{n,m} =  \frac{1}{\mbox{vol}_h(V_n^p)}\int_{V_n^p}\frac{h_{n,m}}{h_{n,m}(p)}\alpha \qquad \mbox{and} \qquad 
\xi^{f_n(p)}_{n,m} =  \frac{1}{\mbox{vol}_h(W_n^q)}\int_{V_n^{f_n(p)}}\frac{h_{n,m}}{h_{n,m}(f_n(p))}\alpha.
$$
Hence, in the $L_1$-norm, we have that
\begin{eqnarray*}
\|\xi_n^p-\xi_n^q\|_1 & \leq & \frac{1}{\mbox{vol}_h(V_n^p)}\int_{V_n^p}\Big|\frac{h|_{U_n}}{h(p)}-\frac{h_{n,m}}{h_{n,m}(p)}\Big|dvol^y+\|\xi^p_{n,m}-\xi_{n,m}^{f_n(p)}\|_1\\
& &  +\frac{1}{\mbox{vol}_h(W_n^q)}\int_{V_n^{f_n(p)}}\Big|\frac{h| _{U_n}}{h(f_n(p))}-\frac{h_{n,m}}{h(f_n(p))}\Big|dvol^{y^\prime} \\
& + & \frac{1}{\mbox{vol}_h(W_n^q)}\int_{P_q} \frac{h}{h(f_n(p))} dvol^v,
\end{eqnarray*}
where the last term is equal to $\mbox{vol}_h(P_q)/\mbox{vol}_h(W_n^q)$. 
Since $\frac{h}{h(f_n(p))}$ is bounded, we can assume that $\mbox{vol}_h(P_q)$ is bounded 
hence this term goes to zero when $n\to \infty$.
\medskip 

To show that $\mathcal{F}$ is amenable we are going to use again Reiter's criterion (but this time according to the formulation given in theorem \ref{teoremapromediable} {\it (iii)}  adapted to the continuous setting). Let $\gamma_n$ be the holonomy transformation that maps the point $p\in L$ to 
$q \in L$. We have that
 \begin{eqnarray*}
& &  \lim_{n\to\infty} \int_{\mbox{Dom}(\gamma_n)}\|\xi_n^p-\xi_n^q\|_1d\nu(y)  \\
& \leq & \lim_{n\to\infty} \int_{T_n} \frac{1}{\mbox{vol}_h(V_n^p)}  \int_{V_n^p}\Big|\frac{h| _{U_n}}{h(p)}-\frac{h_{n,m}}{h_{n,m}(p)}\Big|dvol^yd\nu(y) \nonumber \\
& +  & \lim_{n\to\infty} \int_{T_n}\|\xi^p_{n,m}-\xi_{n,m}^{f_n(p)}\|_1d\nu(y)\\
& + &  \lim_{n\to\infty} \int_{T_n} \frac{1}{\mbox{vol}_h(V_n^{f_n(p)})} 
\int_{V_n^{f_n(p)}}\Big|\frac{h| _{U_n}}{h(f_n(p))}-\frac{h_{n,m}}{h_{n,m}(f_n(p))}\Big|dvol^{y^\prime}d\nu(y^\prime). \nonumber
\end{eqnarray*}
Let us see what happens with each term in the sum when $n$ goes to infinity:  
\smallskip 

\noindent
-- There exists $c \geq 0$ such that $\Big\|\frac{h|_{U_n}}{h(p)}-\frac{h_{n,m}}{h_{n,m}(p)}\Big\|_1\leq c$ for every pair $n,m \in \nn$. On the other hand, there exists $C_n \leq \mbox{vol}_h(V_n^p)$ such that $C_n \to  \infty$ when $n$ goes to $\infty$. For each $m \in \nn$, we have that 
\begin{eqnarray*}
& & \lim_{n\to\infty} \int_{T_n}\frac{1}{\mbox{vol}_h(V_n^p)}\int_{V_n^p}\Big|\frac{h| _{U_n}}{h(p)}-\frac{h_{n,m}}{h_{n,m}(p)}\Big|dvol^yd\nu(y)  \\ 
& \leq &  \lim_{n\to\infty} \frac{1}{C_n} \Big\|\frac{h|_{U_n}}{h(p)}-\frac{h_{n,m}}{h_{n,m}(p)}\Big\|_1 \quad \leq \quad \lim_{n\to\infty} \frac{c}{C_n} \quad =  \quad 0.
\end{eqnarray*}

\noindent
-- The same argument applies to the third therm in the sum above.
\smallskip 

\noindent
-- Since the functions $h_{n,m}$ are continuous with compact support, the integral currents $\xi^p_{n,m}$ and $\xi^{f_n(p)}_{n,m}$ behave as the integral currents in the proof in section \ref{seccionteoremamedidainvariante}, hence $\|\xi^p_{n,m}-\xi^{f_n(p)}_{n,m}\|\to 0$ as $n\to \infty$.
\smallskip 

\noindent
Therefore, for $m$ big enough, 
$$\lim_{n\to \infty} \int_{\mbox{Dom}(\gamma_n)}\|\xi_n^p-\xi_n^q\|_1d\nu(y)=0,$$
implying that the foliation $\mathcal{F}$ is $\mu$-amenable. 
\end{proof}

\section{Final comments}
\label{final}

Before finishing, we will like to point out some questions related to this work that we believe are important. Some of them have already appear in the literature, or were posed to us. In some cases we have a partial answer, and in some others we will like to motivate the reader.
\medskip 

\begin{list}{\labelitemi}{\leftmargin=0pt}
\item[--] {\em Measurable version of theorem
    \ref{teoremamedidainvariante}}
    
The existence of global and leafwise means (which follows from the
amenability and F\o lner conditions) are measurable properties. But we
have used the minimality, a property of topological nature, in order
to prove the equivalence between this notions. D. Gaboriau asked us if
there is a measurable version
of theorem \ref{teoremamedidainvariante}. We can give a partial 
answer. 
\medskip 

Assume that $\mathcal{R}$ is a discrete measured equivalence relation on a standard Borel space $T$, endowed with an 
invariant measure $\nu$. As in the case of 
foliations without  holonomy, finite sets in the equivalence classes are stacked in Borel sets isomorphic to the product of finite sets and Borel subsets of $T$. 
If $\nu$ is {\em conservative}, then there is a decreasing sequence of Borel sets $T_n\subset T$ which meet almost every equivalence class such that $\bigcap_{n \in \nn} T_n = \emptyset$. We can replace minimality by this property.  However,  
we must fix a {\em graphing} $\Phi$ in the sense of \cite{gaboriau} (or equivalently a {\em graph structure} in the sense of \cite{kaimcomptes}) to endow each equivalence class with a metric structure. 
For this,  we may suppose that  all the equivalence relations $\mathcal{R} |_{T_n}$ are Kakutani equivalent (with respect to a sequence of graphings induced by $\Phi$ according to a construction in \cite{gaboriau}, 
\S~II.B). This means that for each $n \in \nn$ and for each point $x \in T_{n+1}$, the inclusion of  $\mathcal{R} |_{T_{n+1}}[x] $ into $\mathcal{R} |_{T_n} [x]$ is a quasi-isometry.
We can resume this discussion in the following result:

\begin{theorem} Let $\mathcal{R}$ be a discrete measured equivalence
  relation on a standard Borel space $T$, endowed with an invariant measure $\nu$ with Radon-Nikodym cocycle $\delta$ and a graphing $\Phi$. Assume that $\nu$ is conservative and therefore there exists a decreasing sequence of Borel sets $T_n\subset T$ which meet almost every equivalence class such that $\bigcap_{n \in \nn} T_n = \emptyset$, and that  the graphed equivalence relations $\mathcal{R} |_{T_n}$ and $\mathcal{R} |_{T_{n+1}}$ are Kakutani equivalent.
Then $\mathcal {R}$ is $\nu$-amenable if and only if 
$\mathcal {R}$ is $\delta$-F\o lner.
\end{theorem}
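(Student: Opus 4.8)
The plan is to prove the two implications separately, exactly as in the proofs of \S\ref{demostracioncarriereghys}--\S\ref{seccionteoremamedidainvariante}, replacing every use of minimality by the conservativity hypothesis together with the Kakutani equivalences. The implication ``amenable $\Rightarrow$ $\delta$-F\o lner'' needs no new idea: it is precisely Proposition \ref{carriereghysquasi-invariante}, which is stated for an arbitrary quasi-invariant measure and makes no use of any topological hypothesis. Hence, if $\mathcal{R}$ is $\nu$-amenable it is hyperfinite, and the Fatou argument of that proposition produces, for $\nu$-almost every $x$, a $\delta$-F\o lner exhaustion of $\mathcal{R}[x]$ with respect to the graphing $\Phi$.

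For the converse, ``$\delta$-F\o lner $\Rightarrow$ amenable'', I would transcribe the proof of Theorem \ref{teoremamedidainvariante}---or rather the discrete reformulation given in the Remark following it---into the measurable category. The three geometric ingredients used there were: (a) a distinguished product box $U \cong P \times T$ in which a plaque-sized F\o lner set can be lifted without distortion to the nearby points of the transversal; (b) the fact that, by minimality, each $T_n$ is a total transversal, so that it meets the class of every point $q$ at controlled distance; and (c) Cass' bound on the dilatation, which guarantees that the isoperimetric ratio is only mildly distorted under transport. In the measurable setting these are replaced, respectively, by: (a$'$) the standard stacking of finite subsets of the classes, i.e.\ the fact that a measurable family of finite sets can be organized in a Borel set isomorphic to (finite set) $\times$ (Borel subset of $T$), along which transport is a fibrewise bijection and hence distortion-free for the measures $|\cdot|_y$; (b$'$) the hypothesis that each $T_n$ meets $\nu$-almost every class, which lets us choose, for a.e.\ $x$ and each $q \in \mathcal{R}[x]$, a base point $f_n(q) \in T_n \cap \mathcal{R}[x]$; and (c$'$) the Kakutani equivalence of $\mathcal{R}\mid T_{n+1}$ and $\mathcal{R}\mid T_n$, whose uniform quasi-isometry constants play exactly the role of Cass' dilatation bound $k$ when a F\o lner set is pushed from one level $T_n$ to the next.

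Concretely, starting from a $\delta$-F\o lner sequence measurably based at the points of $T_n$, I would set $W_n^q = A_n^{f_n(q)} \cup C_n^q$, where $A_n^{f_n(q)}$ is the transported F\o lner set sitting over $f_n(q)$ and $C_n^q$ is a minimal connecting chain in $\mathcal{R}[x]$ joining $q$ to $f_n(q)$---the exact analogue of $W_n^q = V_n^{f_n(p)} \cup P_q$. The associated probability measures $\pi_n^q$ (the normalized $|\cdot|_q$-measures of $W_n^q$) then have to be shown to satisfy Reiter's criterion in the form of Theorem \ref{teoremapromediable}~(iii), namely $\int_{\mbox{Dom}(\gamma)} \|\pi_n^y - \pi_n^{\gamma(y)}\|\, d\nu \to 0$ for every partial transformation $\gamma$; as in \S\ref{seccionteoremamedidainvariante}, the estimate splits into a boundary term controlled by the F\o lner ratio, a volume-comparison term controlled by (c$'$), and a term $|C_n^q|_q / |W_n^q|_q$ coming from the connecting chain.

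The main obstacle is exactly this last term. Because $\bigcap_{n\in\nn} T_n = \emptyset$, the net $T_n \cap \mathcal{R}[x]$ has mesh tending to infinity, so the connecting chain $C_n^q$ may be long and, a priori, its relative cost need not vanish. Overcoming this requires a diagonal choice: for each level one must take the F\o lner set $A_n^{f_n(q)}$ deep enough in the F\o lner exhaustion that its cardinality dominates $d_\Gamma(q, T_n \cap \mathcal{R}[x])$, while simultaneously keeping the compounded quasi-isometry distortion across the levels $T_1 \supset \cdots \supset T_n$ under control---this is where the uniformity of the Kakutani equivalences is essential. Arranging this diagonalization measurably and uniformly in $x$, so that $y \mapsto \pi_n^y$ remains Borel, is the technical heart of the argument; the conservativity hypothesis is what makes the recurrence of almost every class to the sets $T_n$ available in the first place, thereby playing the role that Cass' recurrence characterization of minimal leaves played in Theorem \ref{teoremamedidainvariante}.
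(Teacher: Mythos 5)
Your proposal follows essentially the same route as the paper: the paper offers no detailed proof of this theorem, only the discussion preceding it, which prescribes exactly the substitutions you make --- Proposition \ref{carriereghysquasi-invariante} for the forward implication, and for the converse a transcription of the proof of Theorem \ref{teoremamedidainvariante} (in its discrete form) with conservativity and the sets $T_n$ replacing minimality of the leaves, and the Kakutani equivalences replacing Cass' dilatation bound. The connecting-chain and diagonalization difficulty you isolate is genuine, but it is left equally implicit in the paper, so your sketch is at least as complete as the source.
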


 \item[--] {\em Averaging sequences and tangentially smooth measures}
 
From Plante's work \cite{plan}, it is known that usual F\o lner sequences define transverse invariant measures supported by the limit set of the sequence.
Borrowing the terminology introduced in \cite{goodmanplante}, we will say that any $\eta$-F\o lner sequence $\{V_n\}_{n \in \nn}$ with
$\mbox{area}_h(\partial V_n)/\mbox{vol}_h(V_n)\to 0$
defines an {\em averaging sequence}
$$
 \mu_n(f)  =  \frac{1}{\mbox{vol}_h(V_n)}\int_{V_n} \frac{h(p)}{h(p_n)} \, f(p) \, dvol(p)  \ \ , \ \ \forall f \in C(M),
$$
where $p_n\in V_n$.
Taking a subsequence if necessary, we may assume that 
the averaging sequence
$\mu_n$ converge weakly to 
a measure $\mu$. 
A natural question is to ask if the measure $\mu$  is tangentially smooth. We do not know the 
answer in general, but we know that {\em harmonic averaging sequences
define harmonic measures.} Details will be published in a future paper.
\medskip 

As we previously said, the F{\o}lner and amenable conditions depend strongly on the measure we are considering. Since a F{\o}lner leaf gives us a transverse invariant measure, we could ask (as suggested by S. Hurder) if any foliation is amenable with respect to such a measure.
Observe that the two examples constructed in this paper are amenable with respect to  the atomic measures (supported by the toric leaves) obtained as limit of averaging sequences.
\medskip 

\item[--] {\em F{\o}lner and Liouvillian foliations}

A stronger condition than amenability for a foliation $\mathcal{F}$ of
a compact manifold is to be  {\it Liouvillian}, we refer to proposition
20 of \cite{cofw}. If $\mathcal{F}$ is
equipped with a harmonic measure $\mu$, we say that it 
is  {\em Liouvillian} if $\mu$-almost every leaf does not admit non-constant
bounded harmonic functions. In \cite{kaimbrowniano}, Kaimanovich
proved that a foliation with sub-exponential growth is Liouvillian.
\medskip  

Observe that the examples we constructed imply that there
are F{\o}lner  foliations that are not Liouvillian. Nonetheless, we
could ask if a minimal F{\o}lner foliation is Liouvillian. A positive
answer to this question would imply that every harmonic measure of a minimal F{\o}lner foliation is
completely invariant 
(see \cite{kaimbrowniano}).
\medskip 

\item[] {\em -- F{\o}lner foliations are generically amenable}

The examples we gave of non-amenable F{\o}lner foliations are
unstable  because a small perturbation looses the F{\o}lner
condition. Ghys asked us if {\it almost all, or for an open dense set of foliations,}  F{\o}lner foliations are amenable. The sense of this {\it almost all} has to be properly defined in the set of F{\o}lner foliations. 
\medskip 

As a first observation, we know that the existence of a F\o lner sequence implies the existence of a transverse invariant measure. According to a theorem of D. Sulli\-van (see \cite{sull1}), there is an one-to-one correspondence between transverse invariant measures and foliated cycles. Since foliated cycles form a closed cone in the space of  foliated currents, the existence of a transverse invariant measure for a given foliation is a closed condition. Hence, being F{\o}lner is also a closed condition. But we ignore if the set of F\o lner foliations has empty interior.
\end{list}

\bibliographystyle{plain} %unsrt,alpha,abbrv,plain

\bibliography{biblioDCDS}

\end{document}